\tikzset{
  commutative diagrams/.cd, 
  arrow style=tikz, 
  diagrams={>=stealth}
}
\theoremstyle{definition}
\def\@tocline#1#2#3#4#5#6#7{\relax
  \ifnum #1>\c@tocdepth % then omit
  \else
    \par \addpenalty\@secpenalty\addvspace{#2}%
    \begingroup \hyphenpenalty\@M
    \@ifempty{#4}{%
      \@tempdima\csname r@tocindent\number#1\endcsname\relax
    }{%
      \@tempdima#4\relax
    }%
    \parindent\z@ \leftskip#3\relax \advance\leftskip\@tempdima\relax
    \rightskip\@pnumwidth plus4em \parfillskip-\@pnumwidth
    #5\leavevmode\hskip-\@tempdima
      \ifcase #1
       \or\or \hskip 1em \or \hskip 2em \else \hskip 3em \fi%
      #6\nobreak\relax
    \dotfill\hbox to\@pnumwidth{\@tocpagenum{#7}}\par
    \nobreak
    \endgroup
  \fi}
\newcounter{marginnote}
\DeclareMathAlphabet{\mathpzc}{OT1}{pzc}{m}{it}
\theoremstyle{definition}
\newtheorem{theorem}{Theorem}[subsection]
\newtheorem{corollary}[theorem]{Corollary}
\newtheorem{lemma}[theorem]{Lemma}
\newtheorem{proposition}[theorem]{Proposition}
\newtheorem{remark}[theorem]{Remark}
\newtheorem*{runningexample*}{Running example}
\newtheorem*{aside*}{Aside}
\newtheorem{definition}[theorem]{Definition}
\newtheorem{example}[theorem]{Example}
\newtheorem{proposition-definition}[theorem]{Proposition-Definition}
\newtheorem{maintheorem}{Theorem}
\DeclareMathOperator{\Hilb}{Hilb}
\newcommand{\bcd}{\begin{center}\begin{tikzcd}}
\newcommand{\ecd}{\end{tikzcd}\end{center}}
\newcommand{\cI}{\mathcal{I}}
\newcommand{\cX}{\mathcal{X}}
\newcommand{\CC}{\mathbb{C}}
\newcommand{\PP}{\mathbb{P}}
\newcommand{\RR}{\mathbb{R}}
\newcommand{\AAA}{\mathbb{A}}
\newcommand{\GG}{\mathbb{G}}
\newcommand{\frM}{\mathfrak{M}}
\newcommand{\frX}{\mathfrak{X}}
\newcommand{\Spec}{\operatorname{Spec}}
\NewDocumentCommand{\compatibilitydatum}{m m m m m m O{} O{} O{}}{
\begin{equation*} \begin{tikzcd}[ampersand replacement=\&]
  \: \arrow{r} \& {#1} \arrow{r} \arrow{d}{#7} \& {#2} \arrow{r} \arrow{d}{#8} \& {#3} \arrow{r}{[1]} \arrow{d}{#9} \& \: \\
  \: \arrow{r} \& {#4} \arrow{r} \& {#5} \arrow{r} \& {#6} \arrow{r} \& \:
\end{tikzcd} \end{equation*}}
\NewDocumentCommand{\commutingsquare}{m m m m o O{} O{} O{} O{}}{
\begin{equation}\begin{tikzcd}[ampersand replacement=\&] \label{#5}
  #1 \arrow{r}{#6} \arrow{d}{#7} \& #2 \arrow{d}{#8} \\
  #3 \arrow{r}{#9} \& #4
\end{tikzcd}\IfValueTF{#5}{\label{#5}}{} \end{equation}}
\NewDocumentCommand{\cartesiansquare}{m m m m O{} O{} O{} O{}}{
\begin{equation*}\begin{tikzcd}[ampersand replacement=\&]
  #1 \arrow{r}{#5} \arrow{d}{#6} \arrow[dr, phantom, "\square"] \& #2 \arrow{d}{#7} \\
  #3 \arrow{r}{#8} \& #4
\end{tikzcd} \end{equation*}}
\NewDocumentCommand{\cartesiansquarelabel}{m m m m m O{} O{} O{} O{}}{
\begin{tikzcd}[ampersand replacement=\&]
  #1 \arrow{r}{#6} \arrow{d}{#7} \arrow[dr, phantom, "\square"] \& #2 \arrow{d}{#8} \\
  #3 \arrow{r}{#9} \& #4
\end{tikzcd}\IfValueTF{#5}{\label{#5}}{}
}
\NewDocumentCommand{\triangleofspaces}{m m m O{} O{} O{}}{
\begin{tikzcd} [ampersand replacement=\&]
#1 \arrow{r}{#4} \arrow[bend right]{rr}{#5} \& #2 \arrow{r}{#6} \& #3
\end{tikzcd}}
\newsavebox{\@brx}
\newcommand{\llangle}[1][]{\savebox{\@brx}{\(\m@th{#1\langle}\)}%
  \mathopen{\copy\@brx\kern-0.5\wd\@brx\usebox{\@brx}}}
\newcommand{\rrangle}[1][]{\savebox{\@brx}{\(\m@th{#1\rangle}\)}%
  \mathclose{\copy\@brx\kern-0.5\wd\@brx\usebox{\@brx}}}
\begin{document}
 
\title{From logarithmic Hilbert schemes to degenerations of hyperk\"ahler varieties}
\author[Shafi]{Qaasim Shafi}
\email{mshafi@mathi.uni-heidelberg.de}

\author[Tschanz]{Calla Tschanz}
\email{callatschanz@gmail.com}

\begin{abstract}
    We construct the first examples of good type III degenerations of hyperkähler varieties in dimension greater than 2. These are presented as moduli of 0-dimensional subschemes on expansions of a degeneration of K3 surfaces. We prove projectivity for our expanded degenerations and compute the dual complexes of the special fibre for two specific degenerations of hyperkähler fourfolds. Moreover, we explain the correspondence between geometric strata of the special fibre and simplices in its dual complex.
\end{abstract}

\maketitle
\vspace{-3em}
\tableofcontents

\vspace{-3em}

\section*{Introduction}

By the Bogomolov Decomposition Theorem \cite{Beauville}, compact Kähler manifolds with trivial canonical bundle decompose, up to finite étale cover, into a product of complex tori, Calabi–Yau manifolds, and irreducible holomorphic symplectic (hyperkähler) manifolds. As such, hyperkähler varieties form one of the fundamental building blocks of K-trivial geometry and play a central role in the study of higher-dimensional algebraic and differential geometry. So far, only four deformation types of hyperkähler manifolds are known, the most well studied of which is the Hilbert scheme of points on a K3 surface.

In this paper, we construct and analyse the first explicit example of a good type III degeneration of hyperkähler varieties (see Definition \ref{defn : semistable}) in dimension greater than two. Our construction is a moduli space parametrising zero dimensional subschemes of length $m$ on certain expansions of a type III degeneration of K3 surfaces. For $m=2$, we give an explicit description of the dual complex of the special fibre of our degenerations and explain the correspondence between the geometric strata and and the simplices of the dual complex.

Degenerations of K-trivial varieties lie at the heart of a range of phenomena across geometry. In particular, type III degenerations, also known as maximally unipotent degenerations, play a crucial role in various conjectures involving mirror symmetry, Hodge theory, and moduli theory. For Calabi–Yau and hyperkähler varieties, mirror symmetry predicts \cite{kontsevich2001homological,kontsevich2006affine} that the base of a Lagrangian torus fibration is homeomorphic to the dual complex of a maximally degenerate fibre, and that this complex carries a natural integral affine structure. The perverse filtration of this Lagrangian fibration has been proven to correspond to the weight filtration of such a type III degeneration, through the hyperkähler P = W conjecture \cite{harder2021pw}. In the case of K3 surfaces, understanding the geometry of such degenerations and the associated integral affine structure on the dual complex has played a key role in results concerning the compactification of the moduli space of K3 surfaces \cite{alexeev2023compact}. Being able to study explicit examples of degenerations of hyperkählers as well as their associated dual complexes brings us closer to understanding compactifications of moduli spaces of hyperkählers. Enumerative geometry of hyperkähler varieties, and in particular of the Hilbert scheme of points on a K3 surface have a rich structure and yet there are fundamental questions that remain open \cite{oberdieck2024holomorphic}. Understanding type III degenerations offers an avenue forward using (reduced) degeneration formula techniques \cite{maulik2025logarithmic,blomme2025correlated}.

\subsection{Results}

Let $X \rightarrow C$ be a type III degeneration of K3 surfaces. We focus on the degenerations appearing in Section \ref{Section : K3 degenerations} but the strategy works more generally (see Remark \ref{Remark : general degen}). The relative Hilbert scheme $\mathrm{Hilb}^m(X/C)$ has general fibre the Hilbert scheme of points on a K3 surface, which of course is smooth, but its central fibre $\mathrm{Hilb}^m(X_0)$ can be very singular. This makes it difficult to work with and, in particular, it is not a minimal degeneration. The construction replaces the central fibre in such a way that we obtain a semistable degeneration of the Hilbert scheme of points on a K3 surface. This was achieved in the local situation in \cite{CT} by constructing certain expansions of $X_0$ which allowed families of subschemes in the general fibres to have limits supported away from the singular locus of the central fibre. In this paper, we show how to move from the local situation to an arbitrary degeneration. To illustrate the possibilities, we show that, for the degeneration in Section \ref{Section : Cube degeneration}, globalising is automatic, whereas for the degeneration in Section \ref{Section : Quartic degeneration} a more intricate construction is needed to glue together our local pictures. Through tropical arguments, we show that the resulting expanded degeneration is still projective. The output in either case is a family over a stack of expansions $\frX \rightarrow \mathfrak{C}$. Our main results are the following:

\begin{maintheorem}
    The stack of stable, length $m$ zero dimensional subschemes $\frM_{\mathrm{LW}}^m$ in $\frX$ over $C$, constructed in Section \ref{Section : Globalising}, is a Deligne--Mumford stack, proper over $C$. Moreover, the construction gives a good semistable degeneration of irreducible holomorphic symplectic varieties with general fibre $\mathrm{Hilb}^m(\mathrm{K3)}$. 
\end{maintheorem}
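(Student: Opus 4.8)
The plan is to realise $\frM_{\mathrm{LW}}^m$ as a substack of a relative Hilbert scheme over the stack of expansions $\frC$, and then to establish in turn the four assertions hidden in the statement: algebraicity and the Deligne--Mumford property, properness over $C$, semistability and goodness, and the identification of the general fibre. For the first point, since the globalised family $\frX \to \frC$ constructed in Section~\ref{Section : Globalising} is proper (indeed projective, by the tropical argument already given) and $\frC$ is algebraic, the relative Hilbert scheme $\Hilb^m(\frX/\frC)$ of length-$m$ subschemes of the fibres is an algebraic stack, locally of finite type over $\frC$ and hence over $C$. Boundedness is immediate because every such subscheme has the fixed Hilbert polynomial of $m$ points with respect to a relatively ample line bundle, so I would cut out $\frM_{\mathrm{LW}}^m$ inside this Hilbert stack by the Li--Wu-type stability conditions (predeformability and transversality of the subscheme along the singular strata of $\frX$, together with the requirement that the accompanying expansion carry no infinitesimal automorphism preserving the subscheme), obtaining a substack that is again algebraic and of finite type over $C$.

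For the Deligne--Mumford property, the only source of positive-dimensional automorphisms is the torus acting on the exceptional components of an expansion; the stability condition is engineered precisely so that a stable subscheme is fixed by no nontrivial subtorus, so its automorphism group scheme is finite, and in characteristic zero this forces the diagonal to be unramified. Separatedness is then the uniqueness half of the valuative criterion: two extensions of a stable family over a punctured trait agree over the generic point, hence (being flat) over the smooth locus of the central fibre, and stability pins down both the expansion and the limiting subscheme, so the two extensions coincide.

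The main work, and the step I expect to be the genuine obstacle, is properness over $C$, i.e.\ the existence half of the valuative criterion. Given a stable family over the punctured spectrum of a discrete valuation ring mapping to $C$, if the generic point lands in $C \setminus \{0\}$ nothing needs to be done, so the content is the case where the family degenerates into the singular locus of $X_0$. There I would take the flat limit inside the relative Hilbert scheme of a fixed projective model of $X/C$, observe that this limit may fail to be predeformable — it can be supported on the non-reduced or singular strata of the central fibre — and then, after a ramified base change, choose the expansion dictated by the tropical data of the family (the weights recording the contact orders of the incoming subschemes with the boundary divisors) so that the strict transform of the limit becomes predeformable and transverse, hence stable. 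This is precisely the local statement of \cite{CT}; the additional task here is to check that the globalisation of Section~\ref{Section : Globalising}, and in particular the gluing needed for the quartic degeneration of Section~\ref{Section : Quartic degeneration}, is compatible with these local stabilisations, so that the local limits patch to a single global stable subscheme, and that the resulting morphism $\frM_{\mathrm{LW}}^m \to C$ is of finite type. Together with the valuative criterion this yields properness.

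Finally, to identify the degeneration: over $C \setminus \{0\}$ no expansion occurs, so $\frM_{\mathrm{LW}}^m$ restricts there to $\Hilb^m(X/C)$, whose fibres are Hilbert schemes of $m$ points on smooth K3 surfaces and in particular irreducible holomorphic symplectic of type $\mathrm{K3}^{[m]}$. Smoothness of the total space $\frM_{\mathrm{LW}}^m$ and the assertion that the central fibre is a reduced simple normal crossings divisor are local questions, so they follow from the explicit local models of \cite{CT} together with the smoothness of Hilbert schemes of points on smooth surfaces, the globalisation affecting only how these charts are glued; projectivity over $C$ is again supplied by the tropical argument. It then remains to verify the conditions of Definition~\ref{defn : semistable} that upgrade this to a \emph{good} type III degeneration — reducedness of the central fibre, triviality of the relative canonical bundle, and maximality of the dimension of the dual complex — the first two being local or explicit and the last following from the component structure produced by the construction, with the dual complexes themselves computed in the case $m = 2$ in the later sections.
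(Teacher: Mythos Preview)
Your treatment of the Deligne--Mumford property and properness is essentially the paper's: both reduce to the local results of \cite{CT}, and the main task is checking that the globalisation of Section~\ref{Section : Globalising} is compatible with those arguments. So that part is fine.

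The genuine gap is in semistability. You write that smoothness of the total space and the simple normal crossings property of the central fibre are ``local questions'' that follow from the local models of \cite{CT}. But $\frM_{\mathrm{LW}}^m$ is not defined simply as an open substack of a Hilbert scheme modulo the torus: it is built as a limit over $n$ of quotients $[X[n]/\!\sim]$, where $\sim$ includes, in addition to the $\GG_m^{n}$-action, further identifications between fibres of the same base codimension (the equivalences of \cite[Section~5.2]{CT}). To produce a smooth \'etale atlas one has to unwind those extra identifications, and this is not a purely local matter. The paper's route is to introduce the notion of a subscheme having \emph{maximal limits} (Lemma~\ref{lemma: max limits} and Corollary~\ref{cor: max limits}) and to prove that every equivalence class in $\frM_{\mathrm{LW}}^m$ has a \emph{unique} representative with maximal limits inside $H^m_{[2m],\mathrm{LW}}$ up to $\GG_m^{2m}$. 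This lets one present $\frM_{\mathrm{LW}}^m$ as a genuine global quotient stack $[\,\cdot\,/\GG_m^{2m}]$; since the orbits are closed, Luna's \'etale slice theorem then produces the desired semistable atlas, following \cite[Lemmas~5.4,~7.3]{gulbrandsen2021geometry}. Your proposal skips this step entirely, and without it the claim that the central fibre is reduced snc is not justified.

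A smaller point: the ``good'' in Definition~\ref{defn : semistable} asks for the existence of a relative logarithmic $2$-form $\omega_\pi$ with $\wedge^n\omega_\pi$ nowhere vanishing, not for triviality of the relative canonical bundle nor for a condition on the dimension of the dual complex (the latter pertains to the \emph{type} of the degeneration, which is established separately as a corollary). The paper supplies this $2$-form by appealing to \cite[Proof of Theorem~6.4]{CT2}; your sketch does not address it.
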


\begin{maintheorem}
     For $m=2$, the dual complex $\Pi_0$ of the special fibre of $\frM_{\mathrm{LW}}^m$ is a 4-dimensional delta-complex. Its $k$-dimensional strata correspond to components of base codimension $5-k$ in the expanded degeneration construction and we give a complete explicit description of this delta complex.
\end{maintheorem}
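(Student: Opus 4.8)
The plan is to build the delta-complex $\Pi_0$ by hand, tracking the combinatorics of the expanded degeneration, and then identify it with a concrete polyhedral object. First I would recall that, by the semistable degeneration produced in Theorem B's predecessor (the first main theorem), the special fibre of $\frM_{\mathrm{LW}}^2$ is a reduced simple normal crossings variety, so its dual complex is well-defined as a delta-complex; its $k$-simplices are in bijection with the irreducible components of depth $k$ (equivalently, codimension $k+1$ closed strata) of the special fibre, with face maps recording which strata contain which. The key point is that these strata are themselves moduli of length-$2$ subschemes on the strata of the expanded degeneration $\frX \to \frC$ over the central point, so enumerating them reduces to understanding (i) the combinatorics of the expansion stack $\frC$ in the $m=2$ case, which was worked out in Section~\ref{Section : Globalising} and Sections~\ref{Section : Cube degeneration}--\ref{Section : Quartic degeneration}, and (ii) the possible configurations of a length-$2$ scheme distributed among the components of a given expansion.

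The main steps, in order, would be: (1) list the strata of the type III K3 degeneration $X \to C$ itself — its dual complex is a triangulated $2$-sphere (for the cube/quartic degenerations of Sections~\ref{Section : Cube degeneration}--\ref{Section : Quartic degeneration}), with vertices, edges, and triangles corresponding to components, double curves, and triple points of $X_0$; (2) for each allowable expansion appearing in $\frC$ (these are glued from the local models of \cite{CT}), record its combinatorial type and the associated stratum of $\frM_{\mathrm{LW}}^2$; (3) for a length-$2$ subscheme, separate the cases where the two points lie on the same component versus different components, and where the subscheme is reduced versus a length-$2$ non-reduced scheme with embedded tangent direction — the non-reduced locus contributes extra strata and is exactly where the expansions of \cite{CT} are forced; (4) assemble the resulting cells and face identifications into the $4$-dimensional delta-complex $\Pi_0$, and verify the codimension bookkeeping: a $k$-cell sits in base codimension $5-k$, consistent with $\dim \mathrm{Hilb}^2(\mathrm{K3}) = 4$ and the extra base direction from the degeneration parameter. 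Finally I would check gluing compatibility — that the delta-complex produced from the cube degeneration and from the quartic degeneration are each well-defined independently of the choices made in the globalisation of Section~\ref{Section : Globalising}.

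I expect the main obstacle to be step (3) together with the face-map bookkeeping in step (4): correctly accounting for the strata where a point of the length-$2$ scheme sits on a component of the expansion that was introduced precisely to absorb subschemes near the singular locus, and making sure the incidence relations among these strata (which cell is a face of which) match the tropical/combinatorial picture. This is delicate because the expansions in the quartic case of Section~\ref{Section : Quartic degeneration} are not canonical — different local charts can be glued in inequivalent-looking ways — so one must argue that the resulting dual complex is independent of these choices, presumably by exhibiting it as the dual complex of a fixed minimal (dlt or semistable) model, or by a direct symmetry argument. Once the strata and their incidences are pinned down, producing the "complete explicit description" is then a finite, if intricate, enumeration, and the correspondence between geometric strata and simplices is immediate from the construction of the dual complex.
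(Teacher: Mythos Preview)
Your overall architecture is right: the paper also builds $\Pi_0$ by enumerating stable length-$2$ configurations on the fibres of $\frX \to \frC$, organised by base codimension, and reads off the face maps from specialisation (adding one more subdivision and seeing where the points can land). But two points in your step~(3) and your anticipated obstacle are genuine misconceptions that would derail the enumeration.

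First, the reduced/non-reduced dichotomy plays no role in the stratification of the special fibre. A length-$2$ non-reduced subscheme supported at a smooth point of some $Y_i^\circ$ lies in the \emph{same} $4$-dimensional irreducible component of $(\frM^2_{\mathrm{LW}})_0$ as the nearby reduced ones; the exceptional divisor of the Hilbert--Chow morphism is internal to each component and does not create new SNC strata. Conversely, the expansions of \cite{CT} are forced not by non-reducedness but by the \emph{support} approaching the singular locus of $X_0$ (the double curves and triple points): a reduced pair with one point drifting into $Y_i \cap Y_j$ requires an expansion just as much as a fat point does. The paper's enumeration is entirely in terms of which components of the expanded fibre the support lands in and at what relative ``speed'' (Definition~\ref{speed}); scheme structure never enters. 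Second, you should not expect, and the paper does not prove, that $\Pi_0$ is independent of the globalisation choices of Section~\ref{Section : Quartic}. The paper explicitly remarks that other choices of global subdivision give different birational models of the degenerate Hilbert scheme; one fixes a choice (Figure~\ref{choice of subdiv}) and computes $\Pi_0$ for that model. So drop the invariance argument and instead carry out the direct count: for each base codimension $c \in \{1,\dots,5\}$, list the combinatorial types of stable configurations of two points on the codimension-$c$ fibres of $\frX$, exactly as the paper does in the proofs of Theorems~\ref{count quartic} and~\ref{count cube}.
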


For the \emph{quartic} degeneration of K3 surfaces (see Section \ref{Section : Quartic degeneration}), the resulting delta-complex $\Pi_Q$ is made up of 10 vertices, 45 edges,  110 triangles, 120 tetrahedrons and 48 4-simplices (See Theorem \ref{count quartic}). For the \emph{cube} degeneration of K3 surfaces (see Section \ref{Section : Cube degeneration}) the resulting delta-complex $\Pi_C$ is made up of 21 vertices, 120 edges, 420 triangles, 480 tetrahedrons and 192 4-simplices (see Theorem \ref{count cube}).

\subsection{Future directions}
The results presented here are an important stepping stone in both the theory of hyperkählers and of logarithmic Hilbert schemes. As the first explicit example of a good type III degeneration of hyperkähler varieties outside of K3 surfaces, it opens up new possibilities in the study of degenerations and moduli space compactifications. In future work, we hope to study the integral affine structures of the dual complexes described in Section \ref{section:dual complex}. One of our aims is to generalise the well-known results on K3 surfaces which state that the integral affine structure on the dual complex of a maximally degenerate K3 surface has 24 singular points (possibly colliding).

One of the interesting results that we discovered during this work is the set of numerical values found in Theorem \ref{count quartic} and the fact that they match those of Bagchi and Datta \cite{BD} for the unique simplicial triangulation of $\CC\PP^2$ with 10 vertices. While it is expected that the Euler characteristics for their construction and ours would be the same, the fact that all numbers should match is far from obvious. Indeed, in dimension greater than two, triangulations of complex projective space with the same number of vertices can have very different numbers of $k$-simplices. Our hypothesis for this phenomenon is that this occurs because of the choice of expanded degeneration we have made. Effectively, this choice gives rise to the smallest possible stack of expansions. This could lead to finding more minimal simplicial triangulations of $\CC\PP^n$ for $n>2$. It also situates this construction as particularly meaningful among all logarithmic Hilbert schemes of points. In a sense, it is minimal among all such constructions.

\subsection{Organisation}
Section \ref{section: background} sets up the necessary background. We briefly recall the expanded degeneration construction of \cite{CT}; define terms in hyperk\"ahler geometry and the correct notions of minimality for our situation; and describe the two specific families of K3 surfaces we will later work with.
Section \ref{Section : Globalising} is where we describe how the local arguments of \cite{CT} can be globalised to work on families of K3 surfaces. We discuss how to glue our local models together in a compatible way; a proof that the expanded degenerations obtained are still projective; and that the constructions are semistable stacks.
Finally, in Section \ref{section:dual complex}, we give an in-depth description of the dual complexes of our degenerations for $m=2$.

\subsection*{Acknowledgements}
Q.S. was supported by the starting grant ’Correspondences in enumerative geometry: Hilbert schemes, K3 surfaces and modular forms’, No 101041491 of the European Research Council. C.T. acknowledges support by the DFG through projects number 530132094 and 550535392.

\section{Background}\label{section: background}

In this section, we review the expanded degeneration construction of \cite{CT}, which produces good degenerations of Hilbert schemes of points on semistable families of surfaces. This construction forms the foundation for our later analysis of dual complexes arising from degenerations of hyperkähler varieties. We then recall the necessary background on hyperkähler manifolds and their minimal degenerations. Finally, we present two explicit type III degenerations of K3 surfaces whose associated Hilbert scheme of points degenerations—constructed via \cite{CT}—will serve as our central examples.

\subsection{Expanded degenerations of surfaces}

We begin by recalling the main features of the expanded degeneration construction from \cite{CT}, which generalizes the approach of Li–Wu \cite{LW} in the setting of Hilbert schemes of points on surfaces. While the construction is originally local, it can be globalized to handle degenerations of K3 surfaces, as we will see in later sections.

\subsubsection{Semistable families of surfaces}

Let $\pi \colon X \to \mathbb{A}^1$ be a flat, projective, semistable family of surfaces over an algebraically closed field $k$ of characteristic zero. Étale locally, this family is modeled by the standard singularity
\[
\Spec k[x, y, z, t]/(xyz - t) \longrightarrow \Spec k[t],
\]
where the special fibre $X_0 = \pi^{-1}(0)$ consists of three coordinate planes meeting transversely:
\[
Y_1 = \{x = 0\}, \quad Y_2 = \{y = 0\}, \quad Y_3 = \{z = 0\}.
\]

\subsubsection{Expanded degenerations}\label{Section : expanded degenerations}

Fix an integer $n \geq 0$, and let $\mathbb{A}^{n+1}$ have coordinates $(t_1, \ldots, t_{n+1})$, referred to as the \emph{basis directions}. For Hilbert schemes of $m$ points, we consider $n=2m$. Define a map
\[
\mathbb{A}^{n+1} \longrightarrow \mathbb{A}^1, \quad (t_1, \ldots, t_{n+1}) \mapsto t_1 \cdots t_{n+1},
\]
and form the fibre product
\[
X \times_{\mathbb{A}^1} \mathbb{A}^{n+1}.
\]
This space is singular along loci where the $Y_i$ intersect and two or more $t_i$ vanish. This allows us to make \emph{small blow-ups}, i.e.\ we blow-up Weil divisors which results in $\PP^1$-bundles appearing over singular curves in the fibres of of $X \times_{\mathbb{A}^1} \mathbb{A}^{n+1}\to \mathbb{A}^{n+1}$. The resulting family, which we call an \emph{expanded degeneration}, is denoted by $X[n]\to C[n]\coloneqq \mathbb{A}^{n+1}$.

More precisely, in the étale local model, $X[n]$ is realized as a closed subscheme of 
\[
X \times_{\mathbb{A}^1} \mathbb{A}^{n+1} \times (\mathbb{P}^1)^n
\]
cut out by the equations
\begin{align*}
    x_0^{(1)} t_1 &= x x_1^{(1)}, \\
    y_0^{(1)} t_{n+1} &= y y_1^{(1)}, \\
    y_1^{(k-1)} y_0^{(k)} t_{n+2-k} &= y_0^{(k-1)} y_1^{(k)} \quad \text{for } 2 \leq k \leq n, \\
    y_0^{(n)} x z &= y_1^{(n)} t_1, \\
    x_0^{(k)} y_0^{(n+1-k)} z &= x_1^{(k)} y_1^{(n+1-k)},
\end{align*}
where $(x_0^{(k)} : x_1^{(k)})$, $(y_0^{(k)} : y_1^{(k)})$ are homogeneous coordinates on the newly introduced $\mathbb{P}^1$ factors from the blow-up process. Note that we have not resolved all singularities of the fibre product here. We have, however, created enough modifications of $X_0$ to choose new limits for every family of length $m$ 0-dimensional subschemes of $X\to \AAA^1$ which have smooth support.

\begin{definition}
The exceptional components introduced in the blow-up process (seen as $\PP^1$-bundles over reducible curves in the singular fibres of $X[n]\to \AAA^1$) are called \emph{$\Delta$-components}. We distinguish two types:
\begin{itemize}
    \item The component $\Delta_1^{(k)}$ corresponds to the $k$-th blow-up in the $x$-direction, associated to coordinates $(x_0^{(k)} : x_1^{(k)})$.
    \item The component $\Delta_2^{(k)}$ corresponds to the $k$-th blow-up in the $y$-direction, associated to coordinates $(y_0^{(k)} : y_1^{(k)})$.
\end{itemize}
\end{definition}
We observe, e.g.\ by studying the equations above, that when $x=y=0$ and $z\neq 0$ the $\Delta_1^{k}$ and $\Delta_2^{n+1-k}$ coincide. In particular, the construction of \cite{CT} is not symmetric. This is a crucial point which allows for this construction to give rise to the \emph{smallest possible stack of expansions} and for the resulting degeneration of Hilbert schemes of points to be a semistable stack.

\begin{definition}[Base codimension]
A fibre of the morphism $X[n] \to \AAA^{n+1}$ is said to have \emph{base codimension} $k$ if exactly $k$ of the $t_i$ vanish in this fibre. Similarly, a length $m$ 0-dimensional subscheme with support in this fibre can be said to have base codimension $k$. This terminology extends also to quotients of $X[n]$.
\end{definition}

\begin{definition}\label{speed}
     We will say that two irreducible components of a length $m$ 0-dimensional subscheme in $X_0$ \emph{fall into the singular locus at the same speed} if their support lies in the singular locus of $X_0$ and their degree of vanishing in $x$, $y$ or $z$ is the same.
\end{definition}

\subsubsection{Torus action and moduli of points}\label{Section : torus action}

The family $X[n] \to \mathbb{A}^{n+1}$ carries a natural action of the torus $\mathbb{G}_m^{n}$, which lifts to the relative Hilbert scheme $\Hilb^m(X[n]/\mathbb{A}^{n+1})$. For $(\tau_1,\dots,\tau_{n})\in \GG_m^{n}$, the action is the following:
\begin{align*}
    (x_0^{(k)} : x_1^{(k)}) &\mapsto (\tau_k x_0^{(k)} : x_1^{(k)})\\
    (y_0^{(n+1-k)} : y_1^{(n+1-k)}) &\mapsto (y_0^{(n+1-k)} : \tau_k y_1^{(n+1-k)}) \\
    (t_1,t_2,\dots,t_n, t_{n+1}) &\mapsto (\tau_1t_1, \tau_1^{-1}\tau_2t_2,\dots, \tau_{n-1}^{-1}\tau_n t_n, \tau_{n}^{-1}t_{n+1}).
\end{align*}

Define the \emph{stack of expansions}
\[
\mathfrak{C}\coloneqq \lim_{\to} [C[n]/\sim],
\]
where the direct limit is over all $n$ and $\sim$ denotes the $\GG_m^{n}$-action as well as some additional isomorphisms (see \cite{CT} for details). These equivalences lift naturally to the space $X[n]$ and we define similarly a \emph{family over the stack of expansions} $\mathfrak{X} \to \mathfrak{C}$. A natural extension of \emph{Li–Wu stability} (LW stability) is developed for zero-dimensional subschemes on $\mathfrak{X}$.

\begin{definition}\label{Def: LW stab}
    Let $Z$ be a length $m$ 0-dimensional subscheme of a fibre of $X[n] \to \mathbb{A}^{n+1}$ or $\mathfrak{X} \to \mathfrak{C}$. The subscheme $Z$ is said to be \emph{Li-Wu (LW) stable} if it has finite automorphisms and is supported in the smooth locus of the fibre. We denote the locus of Li-Wu stable points in $\Hilb^m(X[n]/\mathbb{A}^{n+1})$ by $H^m_{[2m],\mathrm{LW}}$.
\end{definition}

The main result of \cite{CT} is that the associated moduli stack $\mathfrak{M}^m_{\mathrm{LW}}$ of LW-stable zero-dimensional subschemes of length $m$ on $\mathfrak{X}$ is Deligne–Mumford and proper over $C$.

In the situation of \cite{MR} the naive moduli problem is not separated and an additional stability condition called Donaldson--Thomas stability is required. One interesting feature of the construction here is that no auxiliary stability condition is needed to ensure properness. This stems from the fact that the combinatorics of the blow-ups in the construction of $X[n]$ are rigidly controlled. 

\begin{definition}
    The support of a length $m$ 0-dimensional subscheme in a fibre of either $X[n]\to C[n]$ or $\mathfrak{X}\to \mathfrak{C}$ will be referred to as a \emph{configuration of points}. Notably, the configurations of points determine the \emph{combinatorial type} of a length $m$ 0-dimensional subscheme. More precisely, if the points lie in the interior of the same 2-dimensional components (before taking the torus quotient), they are of the same type.
\end{definition}

\subsection{Hyperkähler manifolds and degenerations} 

\subsubsection{Hyperkähler varieties}

\begin{definition}
    A smooth projective variety $X$ over $\CC$ is called irreducible holomorphic symplectic (IHS) or hyperkähler if the space of holomorphic 2-forms $H^0(X,\Omega^2_X)$ is spanned by an everywhere nondegenerate form $\omega$ and $X$ does not admit any non-trivial finite \'etale covering (or its analytification is simply connected).
\end{definition}

It follows from the condition on $H^0(X,\Omega^2_X)$ that $X$ is necessarily even dimensional and has trivial canonical bundle. In the complex analytic category there are four known classes of such objects. The first two were constructed by Beauville \cite{Beauville}: 

\begin{enumerate}
    \item The Hilbert scheme, $\mathrm{Hilb}^n(\mathrm{K3})$, of $n$ points on a K3 surface
    \item The generalised Kummer variety $K^{n+1}(A)$ on a 2-dimensional abelian variety $A$
\end{enumerate}

All known examples of hyperkähler manifolds have been shown to be a deformation of one of these two cases, except for two families of examples constructed by O’Grady~\cite{OG1,OG2}.

\subsubsection{Degenerations of hyperkähler varieties}

\begin{definition}\label{defn : semistable}
    A good degeneration of irreducible holomorphic symplectic varieties is a proper, flat morphism $\pi : \cX \rightarrow C$ of relative dimension $2n$ such that 
    \begin{enumerate}
        \item $\pi$ is semistable, i.e. the total space $\cX$ is smooth and the special fibre $\cX_0 = \pi^{-1}(0)$ is a reduced simple normal crossings divisor.
        \item There exists a relative logarithmic $2$-form $\omega_{\pi} \in H^0(\cX,\Omega_{\cX/C}^2(\log \cX_0))$ such that $\wedge^n \omega_{\pi}$ is nowhere vanishing.
    \end{enumerate}
\end{definition}

\begin{definition}
    Let $\frX$ be a Deligne--Mumford stack which is flat and locally of finite type over $C$. We say that $\frX \rightarrow C$ is semistable if there exists an \'etale atlas $\cX \rightarrow \frX$ such that the composition $\cX \rightarrow C$ is semistable.
\end{definition}

\begin{remark}
   The construction of \cite{CT} gives rise to a semistable degeneration because we work with stacks. In general, the coarse moduli space will have quotient singularities. Consequently, when working with schemes, in the literature, such degenerations are often allowed to have dlt singularities. 
\end{remark}

\begin{definition}\label{DEf dual complex}
  Let $E$ be smooth variety (or Deligne--Mumford stack) together with a reduced simple normal crossings divisor $E_0 = \cup_{a \in A} E_a$. The dual complex $\Delta(E_0)$ is the unique $\Delta$-complex with the following properties:
\begin{itemize}
    \item The $d$-dimensional simplices correspond bijectively to the connected components $E_B^i$ of $E_B = \cap_{b \in B}E_b$, as $B$ runs through the subsets $\emptyset \neq B \subset A$ with $|B| = d + 1$.
    \item Let $B$ and $B'$ be two non-empty subsets of $A$. Then $\mathsf{Simp}(E_B^i)$ is a face of $\mathsf{Simp}(E_{B'}^{i'})$ if and only if $E_{B'}^{i'} \subset E_B^i$.
\end{itemize}
\end{definition}

\begin{definition}
    Let $X \rightarrow C$ be a projective degeneration of hyperkähler manifolds (including the K3 case). Let $\nu \in \{1, 2, 3\}$ be the nilpotency index for the associated monodromy operator $N$ on $H^2(X_t)$ (i.e. $N = \log T_u$, where $T_u$ is the unipotent part of the monodromy $T = T_sT_u$). We say that the degeneration is of type I, II, or III respectively if $\nu = 1, 2, 3$ respectively.
\end{definition}

Moreover, the following theorem of Koll\'ar, Laza, Sacc\'a and Voisin \cite{kollar2018remarks} tells us that given a dlt minimal degeneration, the type of the degeneration is dictated by the dimension of the dual complex.

\begin{theorem}
    Let $X \rightarrow C$ be a minimal dlt degeneration of $2n$-dimensional hyperkähler manifolds. Let $\Delta$ denote the dual complex of the central fibre (and
$|\Delta|$ its topological realisation). Then
\begin{enumerate}
\item $\dim |\Delta|$ is $0, n$ or $2n$ if and only if the type of the degeneration is I, II, or III respectively, i.e. $\dim |\Delta| = (\nu - 1)n$, where $\nu$ is as above.
\item If the degeneration is of type III, then $|\Delta|$ is a simply connected closed pseudo-manifold (see \cite{nicaise2016essential} for details), which is a rational homology $\CC \PP^n$.
.
\end{enumerate}
\end{theorem}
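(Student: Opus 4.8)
The plan is to translate both assertions about $|\Delta|$ into statements about the limit mixed Hodge structure on $H^\bullet(X_t;\QQ)$, and then to use the rigid structure of the cohomology ring of a hyperkähler variety to reduce everything to $H^2$, following \cite{kollar2018remarks}. First I would reduce to a semistable model: after a finite base change and a log resolution $X\to C$ becomes semistable, which only subdivides $\Delta$ and hence does not change $|\Delta|$ as a PL space, and for dlt minimal models the dual complex is independent of the chosen model and agrees with the Berkovich essential skeleton (de Fernex--Koll\'ar--Xu, Nicaise--Xu, cf.\ \cite{nicaise2016essential}). For a semistable model with $X_0=\bigcup_i D_i$ and strata $D_I=\bigcap_{i\in I}D_i$, the simplicial cochain complex of $\Delta$ is $\bigl(\bigoplus_{|I|=\bullet+1}H^0(D_I)\bigr)$, which is the weight-zero line of Steenbrink's weight spectral sequence; combined with the Clemens--Schmid exact sequence this gives (i) $H^k(|\Delta|;\QQ)\cong \mathrm{Gr}^W_0 H^k_{\lim}(X_t;\QQ)$ for all $k$ and (ii) $\dim|\Delta|=\max\{d:\ \mathrm{Gr}^W_{k+d}H^k_{\lim}\neq 0\text{ for some }k\}$, which --- since $W$ is the monodromy weight filtration of $N$ and $N^d\colon \mathrm{Gr}^W_{k+d}H^k_{\lim}\xrightarrow{\sim}\mathrm{Gr}^W_{k-d}H^k_{\lim}$ --- is the same as $\max\{d:\ N^d\neq 0\text{ on some }H^k_{\lim}\}$.

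Next I would reduce the problem to $H^2$. Because the limit MHS on $H^2_{\lim}$ has weights in $[0,4]$, one has $N^3=0$ there, so $\nu\in\{1,2,3\}$ and $\nu-1$ is the largest power of $N$ not annihilating $H^2_{\lim}$. By the Verbitsky--Looijenga--Lunts theorem, $H^\bullet(X;\QQ)$ is a module over the Lie algebra $\mathfrak g(X)\cong\mathfrak{so}(b_2(X)+2)$ generated by the hard-Lefschetz $\mathfrak{sl}_2$-triples of classes in $H^2$; the monodromy logarithm $N$ lies in $\mathfrak g(X)$, and the grading element acts on $H^k$ by $k-2n$. I would then decompose $H^\bullet(X;\QQ)$ into irreducible $\mathfrak g(X)$-modules, isolate the Verbitsky component (the one generated by $1\in H^0$), and compute the $\mathfrak{sl}_2$-string lengths explicitly --- as in Soldatenkov's study of limit mixed Hodge structures of hyperkähler manifolds --- to conclude that the largest power of $N$ not killing $H^\bullet_{\lim}$ is $(\nu-1)n$, already attained on $H^{2n}_{\lim}$. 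Together with the first paragraph this gives $\dim|\Delta|=(\nu-1)n$, which is (1).

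For (2), assuming $\nu=3$, the same $\mathfrak g(X)$-module computation gives $\mathrm{Gr}^W_0 H^{\mathrm{odd}}_{\lim}=0$ and $\mathrm{Gr}^W_0 H^{2k}_{\lim}$ one-dimensional for $0\le k\le n$ and zero for $k>n$ (here $N^{2k}$ carries the Hodge--Tate line $\mathrm{Gr}^W_{4k}H^{2k}_{\lim}$, spanned by the $k$-th power of the maximally degenerating class in $H^2$, isomorphically onto $\mathrm{Gr}^W_0 H^{2k}_{\lim}$); by (i) this says $H^\bullet(|\Delta|;\QQ)\cong H^\bullet(\CC\PP^n;\QQ)$. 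That $|\Delta|$ is a closed $2n$-dimensional pseudo-manifold I would read off from the local \'etale SNC form of the dlt model along its deepest strata (each codimension-one face lies on exactly two top-dimensional faces; $\Delta$ has no boundary since $\pi$ is proper over the disk), exactly as in the Kulikov case; simple-connectedness I would obtain from the theory of dual complexes of dlt log Calabi--Yau pairs --- $\pi_1(|\Delta|)$ is finite by Koll\'ar--Xu --- together with $b_1(X_t)=0$ and the pseudo-manifold structure, cf.\ \cite{nicaise2016essential}; and ``rational homology $\CC\PP^n$'' then follows from the cohomology computation together with Poincar\'e--Lefschetz duality on $|\Delta|$.

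The hard part will be the reduction to $H^2$: one must control not only the Jordan type of $N$ on the standard representation of $\mathfrak{so}(b_2+2)$ but the Jordan type it induces on the Verbitsky component and, crucially, on every other irreducible $\mathfrak g(X)$-summand of $H^\bullet(X)$, so as to exclude any summand that would push a power of $N$ past $(\nu-1)n$; this is precisely the point where one uses that $X_t$ is hyperkähler rather than merely K-trivial. A secondary but genuine subtlety is the simple-connectedness of $|\Delta|$ in the type III case, which is not formal and ultimately rests on the minimal model program for the associated dlt pair.
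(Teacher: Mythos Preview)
The paper does not give its own proof of this theorem: it is stated as a result of Koll\'ar, Laza, Sacc\`a and Voisin \cite{kollar2018remarks} and simply quoted as background. So there is no ``paper's proof'' to compare against.

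That said, your proposal is a faithful outline of the argument in \cite{kollar2018remarks}. The identification $H^k(|\Delta|;\QQ)\cong \mathrm{Gr}^W_0 H^k_{\lim}$ via the weight-zero row of the Steenbrink spectral sequence, together with independence of the dual complex under base change and choice of dlt model (via the essential skeleton), is exactly how one accesses $|\Delta|$ Hodge-theoretically. The reduction of the nilpotency index on $H^\bullet$ to that on $H^2$ using the Looijenga--Lunts--Verbitsky $\mathfrak{so}(b_2+2)$-action, and the Jordan-block analysis on the Verbitsky component, is precisely the hyperk\"ahler-specific input that \cite{kollar2018remarks} (and Soldatenkov) use; your caveat that one must bound the $N$-strings on \emph{all} irreducible summands, not just the Verbitsky one, is well placed. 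For part (2), the pseudo-manifold and $\pi_1$ statements are indeed imported from the MMP/dlt theory of dual complexes (Koll\'ar--Xu, de Fernex--Koll\'ar--Xu, Nicaise--Xu), and the rational homology $\CC\PP^n$ claim follows from the weight-zero computation you indicate. In short: your sketch matches the literature proof; the paper itself defers entirely to that literature.
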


\subsection{Two type III degenerations of K3 surfaces}\label{Section : K3 degenerations} 

K3 surfaces give $2$-dimensional hyperkähler varieties. Here we give two specific examples of type III degenerations of K3 surfaces, for which we compute the dual complex for the corresponding Hilbert square degenerations.

\subsubsection{Quartic degeneration}\label{Section : Quartic degeneration}
Let $f_4(x_0,x_1,x_2,x_3)$ be a general homogeneous quartic polynomial. The vanishing locus inside $\PP^3$ is a smooth quartic K3 surface (of degree $4$). Consider $$\cX' = Z(t\cdot f_4 + x_0 x_1 x_2 x_3) \subset \PP^3 \times \AAA^1_t \rightarrow \AAA^1_t.$$
The general fibre is a quartic K3 surface and the fibre over $0$ is the union of the four coordinate hyperplanes in $\PP^3$. The total space $\cX'$ is not smooth, there are precisely $24$ singularities, $4$ along each of the pairwise intersections of the coordinate hyperplanes. These singularities locally look like the singularity at the origin $Z(ab - cd) \subset \AAA^4$. There are two resolutions, either of these choices produces a semistable degeneration $$\cX \rightarrow \AAA^1_t$$ where the fibre over $0$ still consists of four components glued in the same manner, but now these components are blow-ups of $\PP^2$ in up to $4$ points on each coordinate line.
\subsubsection{Cube degeneration}\label{Section : Cube degeneration} Let $f_{2,2,2}(x_0,x_1,y_0,y_1,z_0,z_1)$ be a general trihomogeneous polynomial of degree $(2,2,2)$. The vanishing locus inside $\PP^1 \times \PP^1 \times \PP^1$ is a K3 surface of degree $6$. Consider $$\cX' = Z(t\cdot f_{2,2,2} + x_0 x_1 y_0 y_1 z_0 z_1) \subset \PP^1 \times \PP^1 \times \PP^1 \times \AAA^1_t \rightarrow \AAA^1_t.$$

As before the general fibre is a K3 surface and in this case the fibre over $0$ consists of $6$ surfaces, each a coordinate $\PP^1 \times \PP^1$ inside $\PP^1 \times \PP^1 \times \PP^1$. Once again, the total space is not smooth, there are 24 singularities, two along each $\PP^1$ which is the intersection of two of the components of the special fibre. These can be resolved similarly to the quartic degeneration, with the same choices in the resolution to produce a semistable degeneration $$\cX \rightarrow \AAA^1_t$$
where the fibre over $0$ still consists of $6$ components glued in the same manner, but now these components are the blow-ups of $\PP^1 \times \PP^1$ in up to $2$ points on each of the $4$ coordinate lines.

\section{Global construction}\label{Section : Globalising}
\subsection{Tropicalisations}
Here, we recall the basic dictionary between our geometric and tropical pictures, in so far as these concepts are used in the following sections. For a more in-depth understanding of the role of logarithmic and tropical geometry in this story, see \cite{CT} and \cite{MR}.

Definition \ref{DEf dual complex} formally defined a dual complex. We will explain what this looks like in the situations that interest us. In the local set-up of \cite{CT}, where we study three planes meeting transversely, the dual complex is given by a triangle. We recall Figure \ref{geom and trop special fibre} from \cite{CT}. For those acquainted with logarithmic geometry, the cone over the triangle corresponds to the tropicalisation of $X$ with respect to the divisorial logarithmic structure given by the divisor $X_0$ in $X$. For the purposes of this work we will only need to work with this dual complex triangle (or slice of the tropicalisation at a given height).
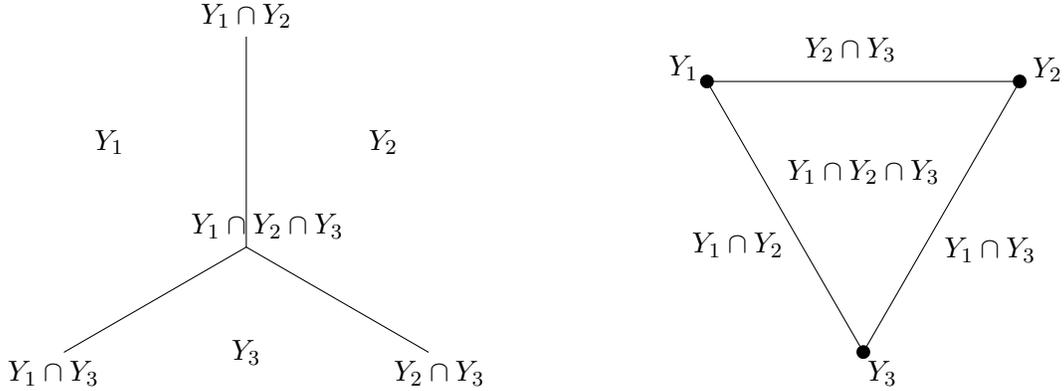
\begin{figure} 
    \begin{center}   
    \begin{tikzpicture}[scale=1.4]
        \draw   
        (0,0) -- (0,2)       
        (-1.732, -1) -- (0,0)
        
        (1.732, -1) -- (0,0);
        \draw (-1.3,1) node[anchor=center]{$Y_1$};
        \draw (1.3,1) node[anchor=center]{$Y_2$};
        \draw (0,-1) node[anchor=center]{$Y_3$};
        \draw (0,2.2) node[anchor=center]{$Y_1\cap Y_2$};
        \draw (-1.832, -1.2) node[anchor=center]{$Y_1\cap Y_3$};
        \draw (1.832, -1.2) node[anchor=center]{$Y_2\cap Y_3$};
        \draw (0.2, 0.2) node[anchor=center]{$Y_1\cap Y_2 \cap Y_3$};
    \end{tikzpicture}
    \hspace{2cm}
    \begin{tikzpicture}[scale=1.2,rotate=60]
        \draw   

        (-1.732, -1) -- (0,2)       
        (-1.732, -1) -- (1.732, -1)
        
        (1.732, -1) -- (0,2);
        \draw (-1.4,0.8) node[anchor=center]{$Y_1\cap Y_2$};
        \draw (1.1,0.8) node[anchor=center]{$Y_2\cap Y_3$};
        \draw (-0.05,-1.65) node[anchor=center]{$Y_1\cap Y_3$};
        \filldraw[black] (0,2) circle (2pt) ;
        \filldraw[black] (-1.732, -1) circle (2pt) ;
        \filldraw[black] (1.732, -1) circle (2pt) ;
        \draw (0,2.3) node[anchor=center]{$Y_1$};
        \draw (-1.832, -1.3) node[anchor=center]{$Y_3$};
        \draw (2, -1.2) node[anchor=center]{$Y_2$};
        \draw (0, 0) node[anchor=center]{$Y_1\cap Y_2 \cap Y_3$};
\end{tikzpicture}
    \end{center}
    \caption{Geometric and tropical pictures of the special fibre $X_0$.}
    \label{geom and trop special fibre}
\end{figure}
We would further like to recall that subdivisions of the tropicalisation or dual complex correspond geometrically to making birational modifications to $X$ in its special fibre. In particular, the blow-ups made in \cite{CT} are iterations of those described in Figure \ref{local subdivision}.
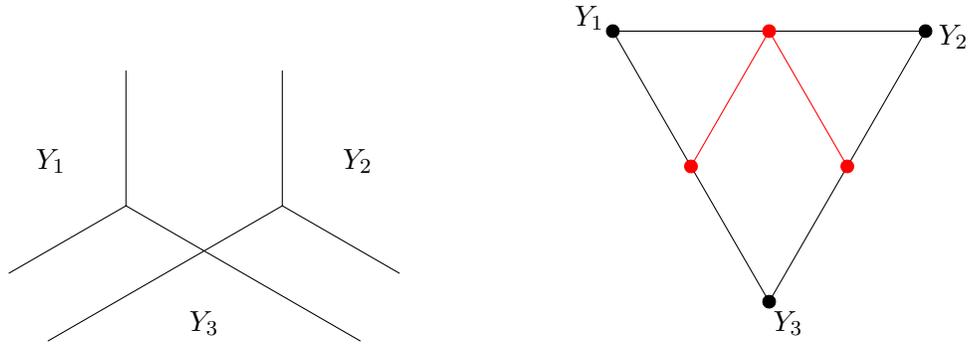
\begin{figure} 
    \begin{center}   
    \begin{tikzpicture}[scale=1.2]
        \draw   (-0.866,0.5) -- (0,0)
        (0,0) -- (0.866,0.5)       
        (-0.866,0.5) -- (-0.866,2)
        (0.866,0.5) -- (0.866,2)
        (-1.732, -1) -- (0,0)
        (-2.165,-0.249) -- (-0.866,0.5)
        
        (1.732, -1) -- (0,0)
        
        (2.165,-0.249) -- (0.866,0.5);
        %\draw (-1.516,-0.375) node[anchor=center]{$\Delta_1^{(1)}$};
        %\draw (1.516,-0.375) node[anchor=center]{$\Delta_2^{(2)}$};
        %\draw (0,1.25) node[anchor=center]{$\Delta_1^{(1)} = \Delta_2^{(2)}$};
        \draw (-1.7,1) node[anchor=center]{$Y_1$};
        \draw (1.7,1) node[anchor=center]{$Y_2$};
        \draw (0,-0.8) node[anchor=center]{$Y_3$};
        
    \end{tikzpicture}
    \hspace{2cm}
    \begin{tikzpicture}[scale=1.2,rotate=60]
\draw   

        (-1.732, -1) -- (0,2)       
        (-1.732, -1) -- (1.732, -1)
        
        (1.732, -1) -- (0,2)
        
        ;
        \draw[red] (-0.866,0.5) -- (0.866,0.5);
        \draw[red] (0,-1) -- (0.866,0.5);
        %\draw (-1.3,0.5) node[anchor=center, color = red]{$\Delta_1^{(1)}$};
        %\draw (1.35,0.7) node[anchor=center, color = red]{$\Delta_1^{(1)} = \Delta_2^{(2)}$};
        %\draw (0,-1.5) node[anchor=center, color = red]{$\Delta_2^{(2)}$};
        \filldraw[red] (-0.866,0.5) circle (2pt) ;
        \filldraw[red] (0.866,0.5) circle (2pt) ;
        \filldraw[black] (0,2) circle (2pt) ;
        \filldraw[black] (-1.732, -1) circle (2pt) ;
        \filldraw[black] (1.732, -1) circle (2pt) ;
        \filldraw[red] (0, -1) circle (2pt) ;
        \draw (0,2.3) node[anchor=center]{$Y_1$};
        \draw (-1.832, -1.3) node[anchor=center]{$Y_3$};
        \draw (1.832, -1.3) node[anchor=center]{$Y_2$};
\end{tikzpicture}
    \end{center}
    \caption{Geometric and tropical picture at $t_1=t_2=0$ in $X[1]$.}
    \label{local subdivision}
\end{figure}

The dual complex of the special fibre $X_0$ in the degenerations of degree 6 or 4 K3 surfaces is given by a triangulation of a sphere. In the quartic case, which we study more closely, this will be the unique triangulation of a sphere with four vertices; we may view it as a tetrahedron.

For the purposes of describing the subdivisions of this tetrahedron (modifications of $X_0$) more easily, we draw it flattened out as in Figure \ref{flattened picture}.
\begin{figure}
    \centering
    \begin{tikzpicture}
        \draw
        %123
        (-1.732, 2) -- (0,0)       
        (-1.732, 2) -- (1.732, 2)
        (0,0) -- (1.732, 2)
        %124
        (-1.732,2) -- (0,4)
        (1.732,2) -- (0,4)
        %134
        (0,0) -- (0,-2)
        (-1.732,2) -- (0,-2)
        (1.732,2) -- (0,-2)
        ;
        \draw (-2,2) node[anchor=center]{$Y_1$};
        \draw (2,2) node[anchor=center]{$Y_2$};
        \draw (0,4.3) node[anchor=center]{$Y_4$};
        \draw (0,0.4) node[anchor=center]{$Y_3$};
        \draw (0,-2.3) node[anchor=center]{$Y_4$};
     \end{tikzpicture}
    \caption{Flattened picture of the triangulated sphere with 4 vertices.}
    \label{flattened picture}
\end{figure}
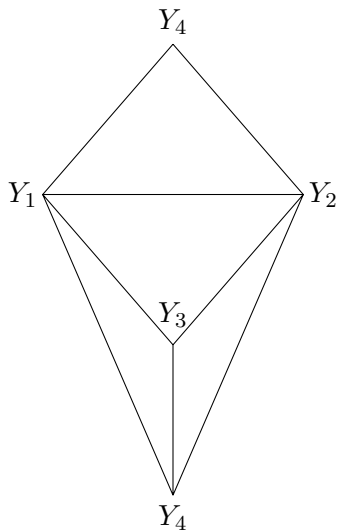
\subsection{Gluing local blow-ups.}\label{Section : Gluing blow-ups}
The subdivision or blow-up depicted in \ref{local subdivision} is an étale local modification around a transverse triple intersection of planes. Here we show how this can be globalised to a surface whose dual complex is the triangulation of a sphere. What this means is that there is a way to subdivide the entire dual complex such that each of the 4 or 6 original triangles carry a subdivision of the type given in Figure \ref{local subdivision}, that these subdivisions glue and that the modification thus defined is projective.

\subsubsection{Cube case.}\label{Section : Cube}
Firstly, we note that the globalisation for the cube degeneration of Section \ref{Section : Cube} is straightforward. Indeed, in this case, it is possible to label all vertices of the dual complex by the numbers $1$, $2$ or $3$, such that each triangle has exactly one of each vertex. We denote by $Y_{(i)}$ the union of geometric components corresponding to vertices labelled $i$.
Then carrying out the construction of Section \ref{Section : expanded degenerations}, replacing $Y_i$ by $Y_{(i)}$, extends the local construction globally.

\subsubsection{Quartic case.}\label{Section : Quartic}
Clearly in the quartic case, where the dual complex can be visualised as a tetrahedron, the above reasoning breaks down and a more subtle argument is required. Indeed, if we tried to follow the above recipe, we could label any three vertices 1, 2 and 3, and the fourth vertex would be left over. Our local construction can still be extended to this case, but we can no longer rely simply on blowing up entire irreducible components of $X_0$. We will start by explaining here how to construct $X[1]$ globally, and $X[n]$ will follow by induction.

\begin{remark}
    We describe here one possible choice of global subdivision which extends the local construction, but several other choices are possible. These different choices would result in different birational models of our degenerate Hilbert scheme of points.
\end{remark}

We take the fibre product $X\times_{\AAA^1}\AAA^2$ as before, with the usual notation $(t_1,t_2)\in \AAA^2$. We describe several local blow-ups, defined around each corner of the fibre $X_0$ where 3 surfaces meet transversely.
\begin{definition}\label{4 BUs}
    These are the four local modifications that we propose. We will see later that they glue together to form a global blow-up.
    \begin{itemize}
        \item In the complement of $Y_4$, we blow up $Y_1$ along the vanishing of $t_1$, followed by the strict transform of $Y_2$ along the vanishing of $t_2$ as before. In local coordinates, the ideals of the blow-ups are $(x,t_1)$ and $(y,t_2)$.
        \item In the complement of $Y_3$, we also blow up $Y_1$ along the vanishing of $t_1$, followed by the strict transform of $Y_2$ along the vanishing of $t_2$ as before. The blow-ups ideals are the same as above.
        \item In the complement of $Y_2$, we blow up $Y_1$ along the vanishing of $t_1$, followed by the strict transform of $Y_3$ along the vanishing of $t_2$. The ideals of the blow-ups are $(x,t_1)$ and $(z,t_2)$.
        \item In the complement of $Y_1$, we blow up $Y_2$ along the vanishing of $t_2$, followed by the strict transform of $Y_4$ along the vanishing of $t_1$. The ideals are $(y,t_2)$ and $(w,t_1)$, where $w$ is the local equation for $Y_4$.
\end{itemize}
As shown in \cite[Proposition 3.1.5]{CT}, the two blow-ups in each corner commute.
\end{definition}
We have just described 4 different blow-ups on the 4 corners of $X_0$. We will show that these glue together to something projective (which therefore defines a global blow-up; birational morphisms between projective spaces are blow-ups). The above described blow-ups are shown as subdivisions of the dual complex in Figure \ref{choice of subdiv}.

\begin{proposition}\label{gluing blow-ups}
    The above four blow-ups glue together.
\end{proposition}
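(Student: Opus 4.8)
The plan is to verify that the four local blow-ups of Definition \ref{4 BUs} agree on overlaps, and then upgrade the resulting abstract birational modification to a projective one. The four corners of $X_0$ correspond to the four triangles of the tetrahedron, and two corners share an edge precisely when the corresponding triangles share an edge — i.e.\ when the two triples of components share a pair $Y_i \cap Y_j$. So the first step is purely combinatorial: enumerate the six edges of the tetrahedron, and for each edge check that the two incident corner-modifications restrict to the \emph{same} modification in a neighbourhood of that edge (equivalently, that the two prescribed subdivisions of the two triangles induce a common subdivision of the shared edge, as drawn in Figure \ref{choice of subdiv}). Concretely, along the locus $Y_i \cap Y_j$ (away from the two other components) one must see that the blow-up ideals prescribed from either side cut out the same center; using the local coordinates from Definition \ref{4 BUs} — where $Y_1,Y_2,Y_3$ have equations $x,y,z$ and $Y_4$ has equation $w$, with $xyzw$ the pullback of $t$ on the relevant chart — this is a direct comparison of the pairs $(x,t_1),(y,t_2),(z,t_2),(w,t_1)$ along each edge. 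The key observation that makes this work is the asymmetry already flagged in the text: the first three corners all blow up ``$Y_1$ first in the $t_1$-direction'', and the fourth blows up $Y_2$ in the $t_2$-direction then $Y_4$ in the $t_1$-direction, and one checks these choices are mutually compatible edge-by-edge (the vertex labelled $4$ being the ``leftover'' vertex is exactly what forces the fourth corner's recipe to differ).

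Having checked agreement on all pairwise overlaps, the second step is a gluing/descent argument: a blow-up along a Weil divisor is a local construction, and modifications that agree on the overlaps of an open cover glue to a global modification $\widetilde{X} \to X \times_{\AAA^1} \AAA^2$. Since the four corners (together with the big open locus where at most two components meet, where no modification is made) cover $X \times_{\AAA^1}\AAA^2$, and since \cite[Proposition 3.1.5]{CT} guarantees the two blow-ups within each corner commute so there is no ambiguity in the order, the glued object $\widetilde{X}$ is well-defined as a scheme proper and birational over $X\times_{\AAA^1}\AAA^2$.

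The third step is projectivity, and this is where I expect the real work to lie. Gluing the local blow-ups gives a scheme but not automatically a projective one: small blow-ups of Weil divisors need not be projective, and the remark after Proposition~\ref{gluing blow-ups} stresses that different gluing choices give different birational models. The natural approach is tropical, consistent with the paper's ``subdivisions of the dual complex'' dictionary: the glued modification $\widetilde{X}$ corresponds to a subdivision $\Sigma'$ of the cone over the tetrahedron (the tropicalisation of $X$ relative to $X_0$), and one wants to produce a $\Sigma'$-piecewise-linear function that is strictly convex, which yields a relatively ample line bundle and hence projectivity over $X\times_{\AAA^1}\AAA^2$ (and then over $\AAA^2$, since $X$ itself is projective over $\AAA^1$). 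So the plan is: describe the subdivision $\Sigma'$ explicitly from the four corner recipes, exhibit an explicit integral PL function on it — built by assigning heights to the newly introduced rays and checking the convexity inequality across each new wall — and conclude. The main obstacle is exactly engineering this convex function globally: locally on each corner the relevant toric modification is obviously projective, but matching the height assignments across the six shared edges so that strict convexity holds on the whole tetrahedron is the delicate point, and it is presumably here that the specific asymmetric choice of Definition \ref{4 BUs} (as opposed to some other a priori valid choice) is what makes a consistent convex function exist.
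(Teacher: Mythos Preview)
Your first two steps are essentially the paper's proof: the authors verify gluing purely through the tropical picture in Figure~\ref{choice of subdiv}, observing that each local blow-up adds a vertex on an edge of the tetrahedron at a prescribed distance (encoded by the $t_1$/$t_2$ colouring), and that the four prescriptions are consistent because the pink and green lengths match across every shared edge. Your edge-by-edge ideal comparison is a slightly more algebraic phrasing of the same check, and your step~2 makes explicit the formal gluing that the paper leaves implicit.

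However, your step~3 is out of scope. Proposition~\ref{gluing blow-ups} asserts only that the four local modifications glue to a well-defined scheme; projectivity is \emph{not} part of this statement and is established separately in Section~3.3 via an explicit strictly convex piecewise-linear function on the subdivided cone complex. So the ``real work'' you anticipate is genuinely there, but it belongs to the later lemmas, not to this proposition.

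One small correction to your step~2: the parenthetical ``big open locus where at most two components meet, where no modification is made'' is wrong --- the $\PP^1$-bundles introduced by the blow-ups appear precisely over the double curves $Y_i\cap Y_j$, so modifications \emph{are} made there. This does not damage your argument, since the four opens $X\setminus Y_i$ already cover $X\times_{\AAA^1}\AAA^2$ (there is no quadruple point), but you should drop that clause.
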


\begin{proof}
    Figure \ref{choice of subdiv} essentially proves this statement.  In order to understand why this is the case, let us first unravel our assumptions. The four blow-ups of Definition \ref{4 BUs} are given locally as blow-ups of $Y_i$ components. In the tropical picture, as seen in the first picture of Figure \ref{choice of subdiv}, this means that the two red vertices added by the local blow-up of $Y_i$ are equidistant to $Y_i$. Together with $Y_i$ they form an equilateral triangle, and the red edge connecting them is parallel to a black edge.
    \begin{remark}
        Note here that because of the way we have drawn the picture, all triangles do not appear equilateral and therefore all added red edges are not parallel to black edges. In a true picture, these edges would be parallel to a black edge as they come, locally, from blowing up one of the irreducible components.
    \end{remark}
    The fact that these blow-ups glue together is equivalent to the new (red) edges joining up. The picture on the right hand side of Figure \ref{choice of subdiv} shows that through adding parallel edges in this way, we really can get them to join up, by showing that the pink and green distances match up between all original black triangles.
    The colours of the edges encode the blow-up data in the following way: an edge from $Y_i$ to $Y_j$, which carries an exceptional vertex $v$, with pink from $Y_i$ to $v$ and green from $v$ to $Y_j$, means that $Y_i$ is blown-up along $t_1$ and $Y_j$ is blown-up along $t_2$. The vertex $v$ is the exceptional component resulting from these blow-ups. The green length corresponds to $t_1$ and the pink to $t_2$.
\end{proof}

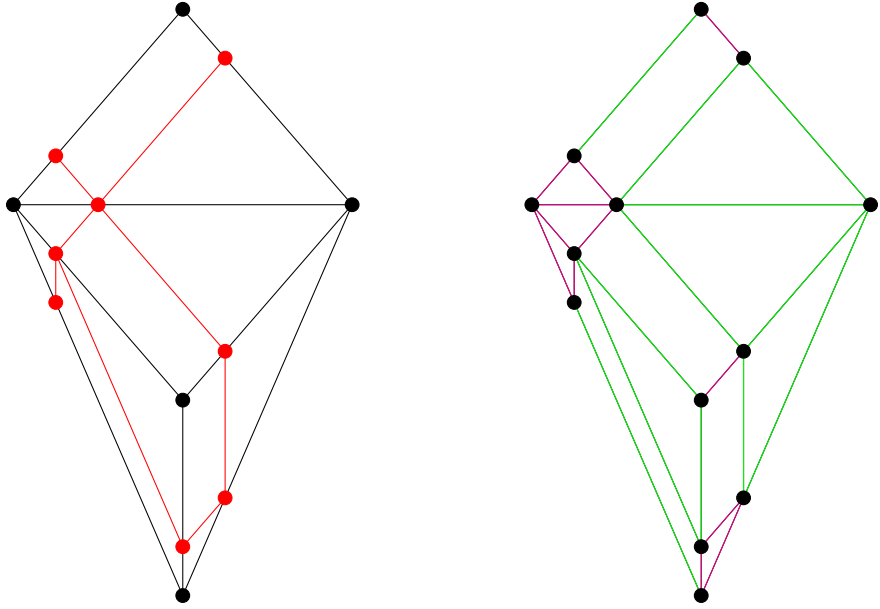
\begin{figure}
    \centering
    \begin{tikzpicture}[scale=1.3]
        \draw
        %123
        (-1.732, 2) -- (0,0)       
        (-1.732, 2) -- (1.732, 2)
        (0,0) -- (1.732, 2)
        %124
        (-1.732,2) -- (0,4)
        (1.732,2) -- (0,4)
        %134
        (0,0) -- (0,-2)
        (-1.732,2) -- (0,-2)
        (1.732,2) -- (0,-2)
        ;
        \draw[red] (-0.866,2) -- (-1.299,1.5);
        \draw[red] (-0.866,2) -- (0.433,0.5);
        \draw[red] (-0.866,2) -- (-1.299,2.5);
        \draw[red] (-0.866,2) -- (0.433,3.5);
        \draw[red] (-1.299,1.5) -- (-1.299,1);
        \draw[red] (-1.299,1.5) -- (0,-1.5);
        \draw[red] (0.433,-1) -- (0,-1.5);
        \draw[red] (0.433,-1) -- (0.433,0.5);
        
        %vertices
        \filldraw[black] (0,0) circle (2pt) ;
        \filldraw[black] (-1.732,2) circle (2pt) ;
        \filldraw[black] (1.732,2) circle (2pt) ;
        \filldraw[black] (0,4) circle (2pt) ;
        \filldraw[black] (0,-2) circle (2pt) ;
        \filldraw[red] (-0.866,2) circle (2pt) ;
        \filldraw[red] (-1.299,1.5) circle (2pt) ;
        \filldraw[red] (0.433,0.5) circle (2pt) ;
        \filldraw[red] (0.433,3.5) circle (2pt) ;
        \filldraw[red] (-1.299,2.5) circle (2pt) ;
        \filldraw[red] (-1.299,1) circle (2pt) ;
        \filldraw[red] (0,-1.5) circle (2pt) ;
        \filldraw[red] (0.433,-1) circle (2pt) ;

     \end{tikzpicture}
     \hspace{2cm}
     \begin{tikzpicture}[scale=1.3]
        \draw
        %123
        (-1.732, 2) -- (0,0)       
        (-1.732, 2) -- (1.732, 2)
        (0,0) -- (1.732, 2)
        %124
        (-1.732,2) -- (0,4)
        (1.732,2) -- (0,4)
        %134
        (0,0) -- (0,-2)
        (-1.732,2) -- (0,-2)
        (1.732,2) -- (0,-2)
        ;
        \draw[black] (-0.866,2) -- (-1.299,1.5);
        \draw[black] (-0.866,2) -- (0.433,0.5);
        \draw[black] (-0.866,2) -- (-1.299,2.5);
        \draw[black] (-0.866,2) -- (0.433,3.5);
        \draw[black] (-1.299,1.5) -- (-1.299,1);
        \draw[black] (-1.299,1.5) -- (0,-1.5);
        \draw[black] (0.433,-1) -- (0,-1.5);
        \draw[black] (0.433,-1) -- (0.433,0.5);
        %Y_1
        \draw[magenta] (-1.732,2) -- (-1.299,2.5);
        \draw[magenta] (-1.732,2) -- (-0.866,2);
        \draw[magenta] (-1.732,2) -- (-1.299,1.5);
        \draw[magenta] (-1.732,2) -- (-1.299,1);
        %Y_2
        \draw[green] (-0.866,2) -- (1.732,2);
        \draw[green] (0.433,3.5) -- (1.732,2);
        \draw[green] (0.433,0.5) -- (1.732,2);
        \draw[green] (0.433,-1) -- (1.732,2);
        %Y_3
        \draw[green] (0,0) -- (-1.299,1.5);
        \draw[green] (0,0) -- (0,-1.5);
        \draw[magenta] (0,0) -- (0.433,0.5);
        %Y_4
        \draw[green] (0,-2) -- (-1.299,1);
        \draw[magenta] (0,-2) -- (0.433,-1);
        \draw[magenta] (0,-2) -- (0,-1.5);
        \draw[green] (0,4) -- (-1.299,2.5);
        \draw[magenta] (0,4) -- (0.433,3.5);
        %inner pieces
        \draw[green] (0.433,3.5) -- (-0.866,2);
        \draw[magenta] (-0.866,2) -- (-1.299,1.5);
        \draw[magenta] (-1.299,1) -- (-1.299,1.5);
        \draw[magenta] (-0.866,2) -- (-1.299,2.5);
        \draw[magenta] (0,-1.5) -- (0.433,-1);
        \draw[green] (0.433,0.5) -- (0.433,-1);
        \draw[green] (0,-1.5) -- (-1.299,1.5);
        \draw[green] (-0.866,2) -- (0.433,0.5);

        %vertices
        \filldraw[black] (0,0) circle (2pt) ;
        \filldraw[black] (-1.732,2) circle (2pt) ;
        \filldraw[black] (1.732,2) circle (2pt) ;
        \filldraw[black] (0,4) circle (2pt) ;
        \filldraw[black] (0,-2) circle (2pt) ;
        \filldraw[black] (-0.866,2) circle (2pt) ;
        \filldraw[black] (-1.299,1.5) circle (2pt) ;
        \filldraw[black] (0.433,0.5) circle (2pt) ;
        \filldraw[black] (0.433,3.5) circle (2pt) ;
        \filldraw[black] (-1.299,2.5) circle (2pt) ;
        \filldraw[black] (-1.299,1) circle (2pt) ;
        \filldraw[black] (0,-1.5) circle (2pt) ;
        \filldraw[black] (0.433,-1) circle (2pt) ;

     \end{tikzpicture}
    \caption{Tropical pictures of our chosen subdivision.}
    \label{choice of subdiv}
\end{figure}

\begin{example}
    To help with the intuition of the above proof, we present an example of a subdivision that does not glue everywhere. Let us assume now that instead of the local blow-ups described in \ref{4 BUs}, we considered the following four blow-ups.
    \begin{itemize}
        \item In the complement of $Y_4$, we blow up $Y_1$ along the vanishing of $t_1$, followed by the strict transform of $Y_2$ along the vanishing of $t_2$ as before. In local coordinates, the ideals of the blow-ups are $(x,t_1)$ and $(y,t_2)$.
        \item In the complement of $Y_3$, we also blow up $Y_1$ along the vanishing of $t_1$, followed by the strict transform of $Y_2$ along the vanishing of $t_2$ as before. The blow-ups ideals are the same as above.
        \item In the complement of $Y_2$, we blow up $Y_4$ along the vanishing of $t_1$, followed by the strict transform of $Y_3$ along the vanishing of $t_2$. The ideals of the blow-ups are $(w,t_1)$ and $(z,t_2)$.
        \item Similarly, in the complement of $Y_1$, we blow up $Y_3$ along the vanishing of $t_2$, followed by the strict transform of $Y_4$ along the vanishing of $t_1$. The ideals are $(z,t_2)$ and $(w,t_1)$.
    \end{itemize}
    In each triangle of \ref{flattened picture}, we have produced a subdivision of the type described in \cite{CT}, as before. Moreover, this configuration may seem more symmetric, as it reflects in the $Y_1\cap Y_2$ edge and the $Y_2\cap Y_3$ edge. This can be seen in Figure \ref{bad subdivision}. However, as this Figure shows, if we proceed in this way, along the $Y_2\cap Y_3$ edge, i.e.\ in the complement of the $Y_1$ and $Y_4$ components, the two $\PP^1$ bundles produced do not glue. In terms of equations, this can be seen as follows:
    \begin{itemize}
        \item In the complement of $Y_1$, we have blow-up ideals given by $(z,t_2), (w,t_1)$.
        \item In the complement of $Y_4$, we have blow-up ideals given by $(x,t_1)$ and $(y,t_2)$.
    \end{itemize}
    In the complement of both $Y_1$ and $Y_4$ (x and w are nonzero), this leads to the equations
    \begin{align*}
        y_0t_2 &= y_1 z, \\
        y_0t_2 &= y_1 y.
    \end{align*} 
    It is clear that the $\PP^1$ bundles we locally defined do not glue in this locus. 
    In the picture this shows up in the following way. The two nodes on the edge $Y_2\cap Y_3$ of Figure \ref{bad subdivision} result from blow-ups of $Y_3$ along $t_2$ and $Y_2$ along $t_2$. They must therefore be a red distance away from $Y_2$ and a red distance away from $Y_3$ respectively (recall the red corresponds to $t_2$). As a consequence, the two nodes cannot be the same, meaning that the $\PP^1$-bundles do not glue in this locus.
\end{example}
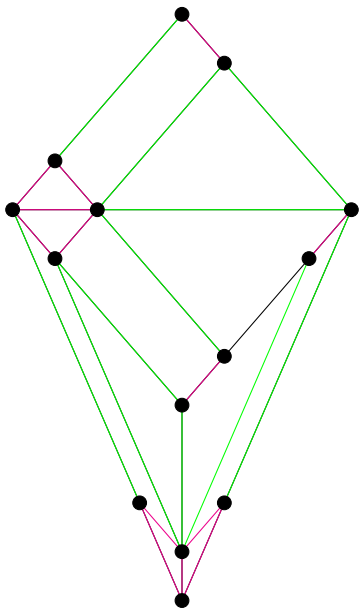
\begin{figure}
    \centering
    \begin{tikzpicture}[scale=1.3]
        \draw
        %123
        (-1.732, 2) -- (0,0)       
        (-1.732, 2) -- (1.732, 2)
        (0,0) -- (1.732, 2)
        %124
        (-1.732,2) -- (0,4)
        (1.732,2) -- (0,4)
        %134
        (0,0) -- (0,-2)
        (-1.732,2) -- (0,-2)
        (1.732,2) -- (0,-2)
        ;
        \draw[black] (-0.866,2) -- (-1.299,1.5);
        \draw[black] (-0.866,2) -- (0.433,0.5);
        \draw[black] (-0.866,2) -- (-1.299,2.5);
        \draw[black] (-0.866,2) -- (0.433,3.5);
        %\draw[black] (-1.299,1.5) -- (-1.299,1);
        \draw[black] (-1.299,1.5) -- (0,-1.5);
        %\draw[black] (0.433,-1) -- (0,-1.5);
        %\draw[black] (0.433,-1) -- (0.433,0.5);
        %Y_1
        \draw[magenta] (-1.732,2) -- (-1.299,2.5);
        \draw[magenta] (-1.732,2) -- (-0.866,2);
        \draw[magenta] (-1.732,2) -- (-1.299,1.5);
        %\draw[magenta] (-1.732,2) -- (-1.299,1);
        %Y_2
        \draw[green] (-0.866,2) -- (1.732,2);
        \draw[green] (0.433,3.5) -- (1.732,2);
        %\draw[green] (0.433,0.5) -- (1.732,2);
        \draw[green] (0.433,-1) -- (1.732,2);
        %Y_3
        \draw[green] (0,0) -- (-1.299,1.5);
        \draw[green] (0,0) -- (0,-1.5);
        \draw[magenta] (0,0) -- (0.433,0.5);
        %Y_4
        %\draw[green] (0,-2) -- (-1.299,1);
        \draw[magenta] (0,-2) -- (0.433,-1);
        \draw[magenta] (0,-2) -- (0,-1.5);
        \draw[green] (0,4) -- (-1.299,2.5);
        \draw[magenta] (0,4) -- (0.433,3.5);
        %inner pieces
        \draw[green] (0.433,3.5) -- (-0.866,2);
        \draw[magenta] (-0.866,2) -- (-1.299,1.5);
        %\draw[magenta] (-1.299,1) -- (-1.299,1.5);
        \draw[magenta] (-0.866,2) -- (-1.299,2.5);
        \draw[magenta] (0,-1.5) -- (0.433,-1);
        %\draw[green] (0.433,0.5) -- (0.433,-1);
        \draw[green] (0,-1.5) -- (-1.299,1.5);
        \draw[green] (-0.866,2) -- (0.433,0.5);
        %new edges in bad subdivision
        \draw[green] (-1.732,2) -- (-0.433,-1);
        \draw[magenta] (0,-2) -- (-0.433,-1);
        \draw[magenta] (0,-1.5) -- (-0.433,-1);
        \draw[green] (0,-1.5) -- (1.299,1.5);
        \draw[magenta] (1.299,1.5) -- (1.732, 2);

        %vertices
        \filldraw[black] (0,0) circle (2pt) ;
        \filldraw[black] (-1.732,2) circle (2pt) ;
        \filldraw[black] (1.732,2) circle (2pt) ;
        \filldraw[black] (0,4) circle (2pt) ;
        \filldraw[black] (0,-2) circle (2pt) ;
        \filldraw[black] (-0.866,2) circle (2pt) ;
        \filldraw[black] (-1.299,1.5) circle (2pt) ;
        \filldraw[black] (0.433,0.5) circle (2pt) ;
        \filldraw[black] (0.433,3.5) circle (2pt) ;
        \filldraw[black] (-1.299,2.5) circle (2pt) ;
        %\filldraw[black] (-1.299,1) circle (2pt) ;
        \filldraw[black] (0,-1.5) circle (2pt) ;
        \filldraw[black] (0.433,-1) circle (2pt) ;
        %new vertices of bad subdivision
        \filldraw[black] (-0.433,-1) circle (2pt) ;
        \filldraw[black] (1.299,1.5) circle (2pt);

     \end{tikzpicture}
    \caption{A subdivision that does not glue.}
    \label{bad subdivision}
\end{figure}

\begin{definition}\label{Definition : X[1]}
    Reusing the notation of Section \ref{section: background}, we denote by $X[1]$ the space resulting from doing all four modifications of \ref{4 BUs} on $X \times_{\AAA^1}\AAA^2$ and gluing them together. By induction, it is clear that the same argument can be extended to further subdivisions to build spaces $X[n]$ for $n>1$. A precise local description of $X[n]$ was given in Section \ref{Section : expanded degenerations}.
\end{definition}

\begin{remark}\label{Remark : general degen}
    In this paper, we focus on the quartic degeneration and cube degeneration only to demonstrate the two possibilities for moving from the local situation to a general type III degeneration of K3 surfaces. Either the components of the central fibre can be consistently labelled as in Section \ref{Section : Cube} or we must come up with a global subdivision, as in Section \ref{Section : Quartic}. For example, suppose we took (a resolution) of the degeneration of a $(3,2)$-hypersurface in $\PP^2 \times \PP^1$ to the toric boundary of $\PP^2 \times \PP^1$. Then a global subdivision for the central fibre giving rise to $X[1]$ is displayed in Figure \ref{fig:threetwo}.
\end{remark}

\begin{figure}[h]
    \centering
    \includegraphics[width=0.3\linewidth]{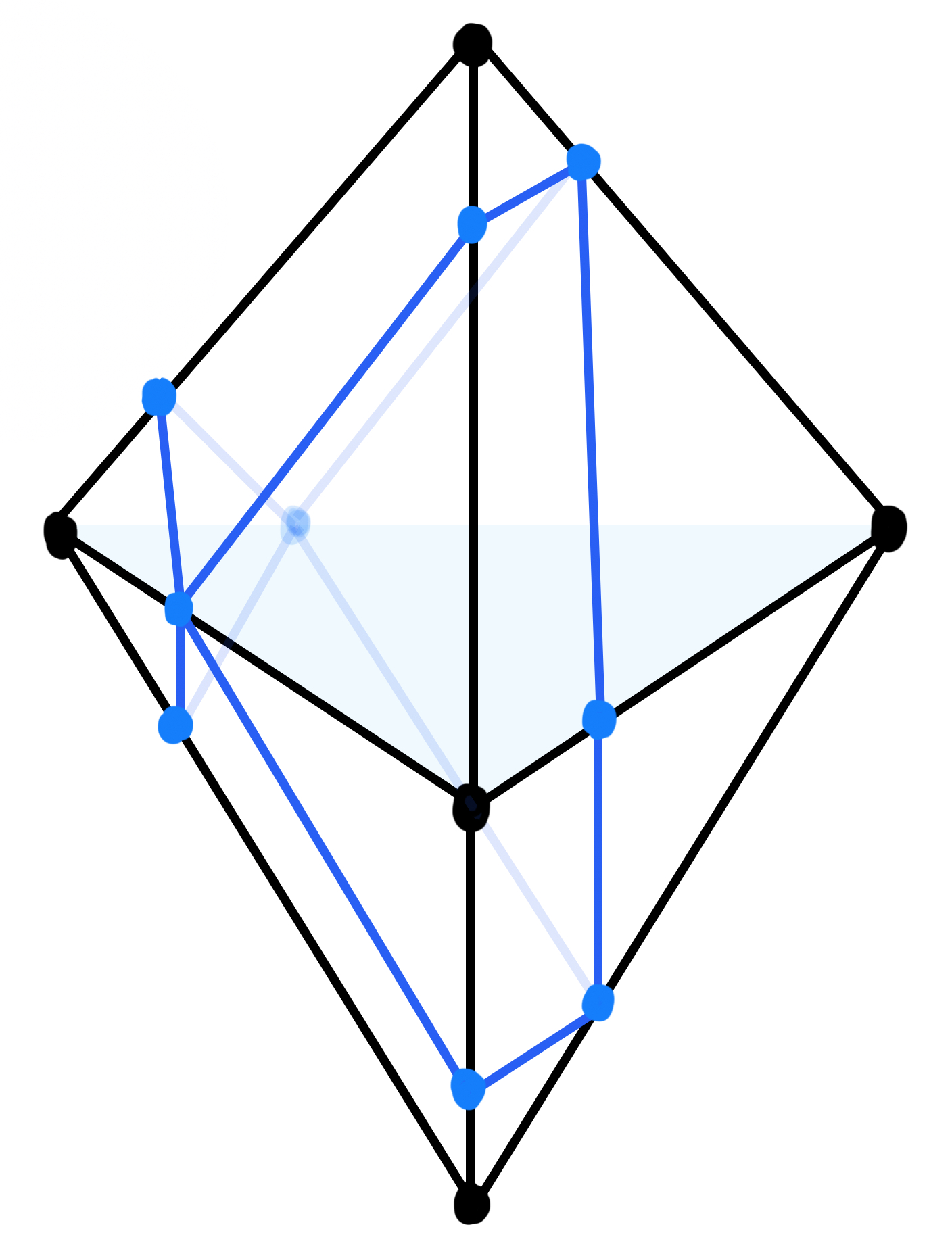}
    \caption{Subdivision of the dual complex for the central fibre of the degeneration described in Remark \ref{Remark : general degen}}
    \label{fig:threetwo}
\end{figure}

\subsection{Projectivity}

The goal of this section is to show that the expansions introduced in Section \ref{Section : Gluing blow-ups} are projective. Specifically, we show that the morphism $X[n] \rightarrow \AAA^{n+1}$ is projective.

For the cube degeneration this is immediate from \cite[Proposition 3.1.7]{CT}. For the quartic degeneration we explain how the tropical picture in Figure \ref{choice of subdiv} encodes a way to construct $X[n]$ from $X \times_{\AAA^1} \AAA^{n+1}$, in line with toric geometry. Toric varieties correspond to fans; a collection of rational polyhedral cones glued along faces, living inside an ambient vector space. Moreover, subdivisions of fans correspond to birational modifications of the associated toric variety. In our situation, neither $X[n]$ nor $X \times_{\AAA^1} \AAA^{n+1}$ are toric varieties. However, there is a \emph{cone complex} $\Sigma_n$ associated to $X \times_{\AAA^1} \AAA^{n+1}$. In other words, a collection of rational polyhedral cones without the global embedding into a vector space. Once again, subdivisions of this cone complex produce birational modifications \cite[Section 1.3]{MR}. The tropical picture in Figure \ref{choice of subdiv} encodes a subdivision of this cone complex which produces the birational modiciation $X[n] \rightarrow X \times_{\AAA^1} \AAA^{n+1}$. Using this perspective, the same combinatorial criterion of projectivity in toric geometry can also be used to show that the corresponding morphism is projective. 

\subsubsection{Cone complex of $X \times_{\AAA^1} \AAA^{n+1}$}

The cone complex $\Sigma_n$ is itself the fibre product of the cone complexes $\Sigma(X) \times_{\Sigma(\AAA^1)} \Sigma(\AAA^{n+1})$ associated to $X,\AAA^{n+1}, \AAA^1$. They are each given by the cones over the dual complexes with respect to the origin in $\AAA^1$, the coordinate hyperplanes in $\AAA^{n+1}$ and the special fibre of the degeneration in $X$ respectively. Explicitly these are:

\begin{itemize}
    \item The cone complex $\Sigma(\AAA^1)$ is the fan of the toric variety $\AAA^1$, a single ray $\RR_{\geq 0}$.
    \item The cone complex $\Sigma(\AAA^{n+1})$ is the fan of the toric variety $\AAA^{n+1}$, an $n+1$ dimensional maximal cone $\RR^{n+1}_{\geq 0}$.
    \item The cone complex $\Sigma(X)$ is the cone over the tetrahedron. It consists of four $3$-dimensional cones, each one glued to the three others along one of the three two-dimensional faces.
\end{itemize}

The morphisms are as follows:

\begin{itemize}
    \item $\Sigma(\AAA^{n+1}) \rightarrow \Sigma(\AAA^1)$ is the morphism $(\tau_1,\dots,\tau_n) \mapsto \tau_1 + \dots + \tau_n$
    \item On each three dimensional cone the morphism $\Sigma(X) \rightarrow \Sigma(\AAA^1)$ is $(a_1,a_2, a_3) \mapsto a_1 + a_2 + a_3$
\end{itemize}

One can visualise the cone complex $\Sigma_n$, see Figure \ref{fig:Sigma}, as a fibration over $\Sigma(\AAA^{n+1})$ where the fibre over the origin is a point and the fibre over all other points is the (boundary of the) tetrahedron with edge lengths $\tau_1 + \dots +\tau_n$.

\begin{figure}\hspace{-2cm}
    \centering
    \includegraphics[width=1\linewidth]{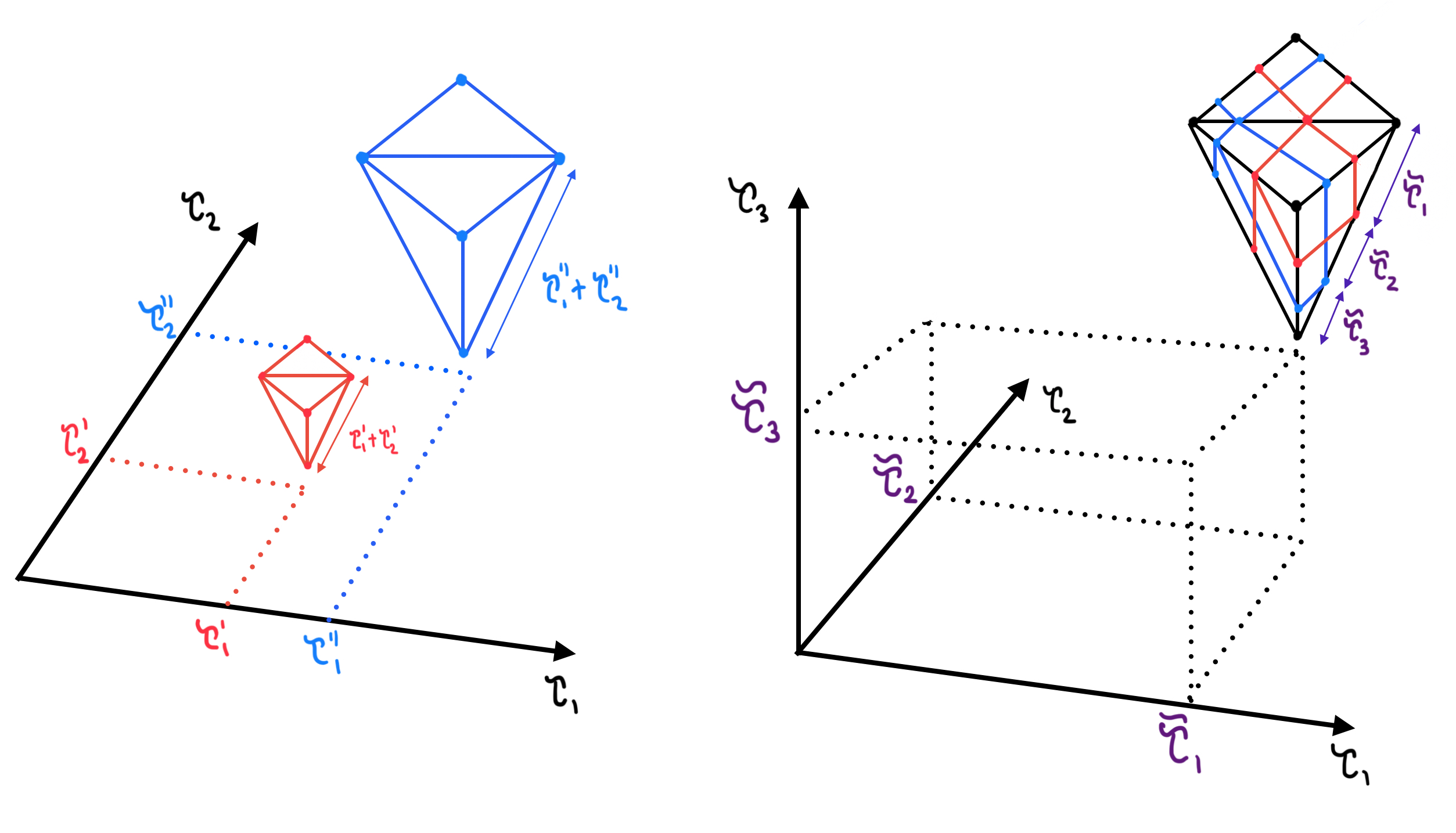}
    \caption{Cone complex $\Sigma_1$ (left) and the subdivided cone complex $\widetilde{\Sigma_2}$.}
    \label{fig:Sigma}
\end{figure}

\subsubsection{Subdivision and cone complex of $X[n]$}

This allows us to name a subdivision of the cone complex $\Sigma_n$ by giving a polyhedral subdivision of the tetrahedron fibres. Our subdivision is given by Figure \ref{fig:Sigma}. The fibre over $(\tau_1,\dots,\tau_n)$ still consists of a tetrahedron with edge lengths $\tau_1 + \dots + \tau_n$. However now the tetrahedron is polyhedrally subdivided as in Figure \ref{fig:Sigma}; on each tetrahedron fibre over $(\tau_1,\dots,\tau_n)$ we first make the polyhedral subdivision as in Figure \ref{choice of subdiv}, determined by adding a vertex along the edge connecting the vertices corresponding to $Y_1$ and $Y_2$ at distance $\tau_1$ from the $Y_2$ vertex. Then we add an identical subdivision determined by adding another vertex at distance $\tau_2$ from our previous vertex and so on until we have introduced $n$ new vertices to the edge. Moreover, the cones of the new cone complex $\widetilde{\Sigma}$ are distinguished by the varying $\tau_1,\dots,\tau_n$ but remaining within a fixed polyhedral region of the subdivided tetrahedron. In particular, the number of maximal $(n+3)$-dimensional cones is equal to the number of maximal polyhedral regions of the subdivided tetrahedron. Moreover, this subdivision produces the birational modification $X[n] \rightarrow X \times_{\AAA^1} \AAA^{n+1}$ described by the local blow-ups from Definition \ref{Definition : X[1]}.

%\begin{figure}
 %   \centering
    %\includegraphics[width=0.6\linewidth]{IMG_1811.jpeg}
    %\caption{The subdivided cone complex $\widetilde{\Sigma}$ for $n=2$.}
    %\label{fig:Sigmatilde}
%\end{figure}

In toric geometry, piecewise-linear functions on the fan correspond to torus equivariant Cartier divisors on the associated toric variety. In other words, real-valued functions that are linear on each cone of the fan. Moreover, ampleness of the associated line bundle amounts to this function being \emph{strictly convex} \cite[Theorem 6.1.14]{cox2011toric}, i.e.\ convex and such that the function is different on different cones. There is a similar criterion to check whether a toric morphism is projective \cite[Theorem 7.2.11]{cox2011toric}. We will need a similar result for cone complexes. 

\begin{lemma}
    Let $\phi : \widetilde{\Sigma} \rightarrow \Sigma$ be a subdivision. Suppose $\widetilde{\Sigma}$ admits a piecewise linear function $\varphi$, which is strictly convex on each $\phi^{-1}(\sigma)$ for $\sigma$ a cone of $\Sigma$. Then the associated birational modification is projective.
\end{lemma}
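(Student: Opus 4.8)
The plan is to reduce the assertion to the classical criterion for projectivity of the birational morphism attached to a fan subdivision carrying a strictly convex support function \cite[Thms.~6.1.14 and 7.2.11]{cox2011toric}, applied one cone of $\Sigma$ at a time, and then to glue the resulting relatively ample line bundles into a single global one using that $\varphi$ is defined on all of $\widetilde{\Sigma}$. Write $f\colon \widetilde{Y} \to Y$ for the birational modification associated to $\phi$, where $Y$ is the log smooth (toroidal) space whose cone complex is $\Sigma$; in the case of interest $Y = X\times_{\AAA^1}\AAA^{n+1}$ and $\widetilde{Y} = X[n]$. Since $f$ is attached to a subdivision it is proper \cite[\S1.3]{MR}, so to show it is projective it suffices to produce a line bundle on $\widetilde{Y}$ which restricts to a relatively ample bundle over each chart $U_\sigma \subseteq Y$ of an open toroidal cover ($U_\sigma$ being the chart associated to a cone $\sigma$ of $\Sigma$), since relative ampleness of a line bundle on a proper scheme can be checked on an open cover of the base.

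First I would work over a fixed cone $\sigma$ of $\Sigma$. By definition of a cone complex, $\sigma$ is embedded in the real vector space $V_\sigma$ it spans, the preimage $\phi^{-1}(\sigma)$ is an honest fan $\Sigma_\sigma$ in $V_\sigma$ subdividing the cone $\sigma$, and $f^{-1}(U_\sigma)\to U_\sigma$ is, étale locally on $U_\sigma$, the toric modification attached to the subdivision $\Sigma_\sigma$; in our setting $U_\sigma$ is, étale locally, the explicit affine toric variety $\Spec k[x,y,z,t_1,\dots,t_{n+1}]/(xyz - t_1\cdots t_{n+1})$ of Section \ref{Section : expanded degenerations}, so this identification is concrete. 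By hypothesis $\varphi$ restricts to a strictly convex piecewise linear function on $\Sigma_\sigma$, so by \cite[Thms.~6.1.14 and 7.2.11]{cox2011toric} the Cartier divisor $D_{\varphi|_{\Sigma_\sigma}}$ it determines is relatively ample for $f^{-1}(U_\sigma)\to U_\sigma$; in particular this morphism is projective.

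Finally I would glue. A piecewise linear function on a cone complex determines a Cartier divisor on the associated toroidal space, compatibly with restriction to charts \cite[\S1.3]{MR}, so the local divisors $D_{\varphi|_{\Sigma_\sigma}}$ are the restrictions of one Cartier divisor $D_\varphi$ on $\widetilde{Y}$, and $L \coloneqq \mathcal{O}_{\widetilde{Y}}(D_\varphi)$ is the desired global line bundle: by the previous paragraph $L|_{f^{-1}(U_\sigma)}$ is relatively ample over $U_\sigma$ for every $\sigma$, hence $L$ is relatively ample over $Y$ and $f$ is projective. Composing with the projective morphism $X\times_{\AAA^1}\AAA^{n+1}\to\AAA^{n+1}$ then gives that $X[n]\to\AAA^{n+1}$ is projective, which is the form in which the statement is used.

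I expect the main obstacle to be the second step: transporting \cite[Thm.~7.2.11]{cox2011toric} from the toric to the toroidal/cone-complex setting. One must check that $\phi^{-1}(\sigma)$ is genuinely a fan in $V_\sigma$ — using that $\phi$ is a subdivision rather than a more general logarithmic modification, and that a cone complex is assembled from cones each embedded in a lattice — and that the étale-local toric model of $U_\sigma$ is compatible with the combinatorics, so that $\varphi|_{\Sigma_\sigma}$ is exactly the support function computing $f$ over $U_\sigma$ and strict convexity of it translates into relative ampleness. In the concrete situation of this paper the cones of $\Sigma_n$ and the charts $U_\sigma$ are the explicit ones recorded above, so these compatibilities can be verified directly; stating them cleanly for an arbitrary cone complex is the delicate point.
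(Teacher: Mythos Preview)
Your argument is correct but takes a different route from the paper. You work chart by chart: over each cone $\sigma$ of $\Sigma$ you invoke the classical toric criterion to produce a relatively ample bundle on $f^{-1}(U_\sigma)\to U_\sigma$, and then you glue these into a single Cartier divisor $D_\varphi$ on $\widetilde{Y}$ using that $\varphi$ is globally defined on $\widetilde{\Sigma}$, finally checking relative ampleness Zariski-locally on the base. The paper instead passes through the Artin fan: by \cite[Definition~1.3.2]{MR} the modification $\widetilde{Y}\to Y$ is the pullback of the morphism of quotient stacks $[X_{\widetilde{\Sigma}}/T_{\widetilde{\Sigma}}]\to[X_{\Sigma}/T_{\Sigma}]$, so it suffices to show that the single toric morphism $X_{\widetilde{\Sigma}}\to X_{\Sigma}$ (for any fan structure realising the cone complex) is projective, descend the equivariant relatively ample bundle to the quotient stack, and base-change. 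Your approach is more hands-on and avoids the stack machinery, at the price of having to justify carefully that PL functions on a cone complex glue to global Cartier divisors on the toroidal space and that the toroidal charts $U_\sigma$ really form a Zariski cover (both fine in the situation at hand, as you note); the paper's approach hides all the gluing inside the Artin-fan cartesian square and reduces everything to one toric computation plus a base-change, at the price of importing that formalism.
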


\begin{proof}
   % \Qaasim{this is not ready/complete} 
   The condition implies that the morphism of toric varieties $X_{\widetilde{\Sigma}} \rightarrow X_{\Sigma}$ is projective. We are abusing notation in the sense that the toric variety is determined by a fan, rather than just a cone complex structure, but the choice does not matter. On the other hand, unraveling the definition of a birational induced by a subdivision of cone complexes, \cite[Definition 1.3.2]{MR}, tells us that the birational modification $X[n] \rightarrow X \times_{\AAA^1} \AAA^{n+1}$ fits into the cartesian diagram
\begin{equation*}
    \begin{tikzcd}
{X[n]} \arrow[d] \arrow[r]             & {[X_{\widetilde{\Sigma_n}}/T_{\widetilde{\Sigma_n}}]} \arrow[d] \\
X \times_{\AAA^1} \AAA^{n+1} \arrow[r] & {[X_{\Sigma_n}/T_{\Sigma_n}]}                             
\end{tikzcd}
\end{equation*}
where $T_{\widetilde{\Sigma_n}},T_{\Sigma_n}$ denote the dense tori inside $X_{\widetilde{\Sigma_n}}, X_{\Sigma_n}$. Since the right hand morphism is representable, a relatively ample line bundle on $[X_{\widetilde{\Sigma_n}}/T_{\widetilde{\Sigma_n}}]$ is such that the pullback via any morphism of schemes gives a relatively ample line bundle. The result then follows from the fact that this is equivalent to an equivariant relatively ample line bundle on the prequotient. 
\end{proof}

\begin{lemma}
    The morphism $X[1] \rightarrow X \times_{\AAA^1} \AAA^2$ is a projective morphism.
\end{lemma}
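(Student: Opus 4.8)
The plan is to apply the criterion established in the preceding Lemma to the subdivision $\phi\colon \widetilde{\Sigma_1}\to\Sigma_1$ that produces the modification $X[1]\to X\times_{\AAA^1}\AAA^2$: it suffices to exhibit a piecewise linear function $\varphi$ on $\widetilde{\Sigma_1}$ whose restriction to $\phi^{-1}(\sigma)$ is strictly convex for every cone $\sigma$ of $\Sigma_1$. Since a face of a strictly convex function is again strictly convex, it is enough to arrange this over the four maximal cones $\sigma_1,\dots,\sigma_4$ of $\Sigma_1$, namely the cones over the four triangular faces of the tetrahedron, fibred over $\RR^2_{\geq 0}$.

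First I would identify $\phi^{-1}(\sigma_i)$ concretely. Over the face opposite $Y_i$, Figure \ref{choice of subdiv} presents the subdivision as exactly the cone-complex model of the pair of blow-ups of smooth centres attached to that corner in Definition \ref{4 BUs}: blow up some $Y_a$ along $\{t_1=0\}$, then the strict transform of some $Y_b$ along $\{t_2=0\}$. Each of these is a star subdivision at a new ray, and since blowing up a smooth centre is projective, on each $\phi^{-1}(\sigma_i)$ taken in isolation there is a strictly convex piecewise linear support function: for instance the one vanishing on the rays of $\Sigma_1$, equal to a small negative constant on the first new ray, and modified by a further smaller negative constant on the second. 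The remaining task is to make these four local choices compatible so that they glue to a single $\varphi$ on $\widetilde{\Sigma_1}$.

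To that end I would set $\varphi=0$ on the rays of $\widetilde{\Sigma_1}$ coming from the divisors $Y_1,\dots,Y_4$ and from the basis directions $t_1,t_2$, and assign to each new ray $\rho$, the exceptional ray of one of the blow-ups of Definition \ref{4 BUs}, a value $-\varepsilon_\rho<0$, with the $\varepsilon_\rho$ chosen small and ordered so that in each corner the ray coming from the first ($t_1$-)blow-up receives a strictly larger dent than the one from the second ($t_2$-)blow-up nested inside it; extending linearly over the cones of $\widetilde{\Sigma_1}$ defines the candidate $\varphi$. The key point is that a new ray lying on an edge $Y_aY_b$ of the tetrahedron fibre, and therefore shared between two of the cones $\sigma_i,\sigma_j$, is produced by the very same blow-up in the two adjacent local charts — this is precisely the content of Proposition \ref{gluing blow-ups}, and the place where the good subdivision is used rather than the one in Figure \ref{bad subdivision} — so $\varphi(\rho)$ is unambiguous and $\varphi$ is well defined.

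It then remains to check that $\varphi|_{\phi^{-1}(\sigma_i)}$ is strictly convex for each $i$: it is linear on each maximal region by construction, and at each interior wall one verifies that it bends strictly downward, which reduces, cone by cone, to the standard computation that the support function of an iterated star subdivision at smooth centres is strictly convex, the freedom in the small nested $\varepsilon_\rho$ guaranteeing that the linear pieces are pairwise distinct. I expect the main obstacle to be exactly this compatibility across the shared faces — ensuring that the dents imposed on the edges $Y_aY_b$ are forced to agree from both sides, so that no edge receives conflicting demands; as the contrast with Figure \ref{bad subdivision} shows, this is a genuine constraint, met precisely because of the choice of centres in Definition \ref{4 BUs}. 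The same reasoning then applies to $X[n]\to X\times_{\AAA^1}\AAA^{n+1}$ via the inductive construction of $X[n]$.
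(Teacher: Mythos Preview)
Your overall plan—apply the preceding lemma by exhibiting a piecewise linear $\varphi$ on $\widetilde{\Sigma_1}$ that is strictly convex on each $\phi^{-1}(\sigma_i)$—matches the paper's. But the specific construction you propose does not work, because you have misread the combinatorics of the subdivision.

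The centres blown up in Definition~\ref{4 BUs} are \emph{Weil divisors}: for instance $V(x,t_1)$ has codimension one in the fourfold $\{xyz=t_1t_2\}$. These are the ``small blow-ups'' of Section~\ref{Section : expanded degenerations}: the map $X[1]\to X\times_{\AAA^1}\AAA^2$ is an isomorphism in codimension one, and there are no exceptional divisors in the total space. Correspondingly, $\widetilde{\Sigma_1}\to\Sigma_1$ introduces \emph{no new rays} and is not a star subdivision. Concretely, a maximal cone of $\Sigma_1$ over a face $Y_iY_jY_k$ is the cone over the triangular prism $\Delta^2\times\Delta^1$, with six extremal rays $r_{ab}=(e_{Y_a},e_{t_b})$; the passage to $\widetilde{\Sigma_1}$ is a triangulation of this prism obtained by choosing diagonals of its three rectangular $2$-faces, using only the existing vertices. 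So your recipe ``$\varphi=0$ on the old rays, $-\varepsilon_\rho$ on the new exceptional rays'' collapses to the zero function. Note also that the rays you describe as coming ``from the divisors $Y_1,\dots,Y_4$'' and ``from the basis directions $t_1,t_2$'' do not exist separately in the fibre product $\Sigma_1$: the extremal rays are precisely the eight products $r_{ij}$.

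What one actually has to do is assign values to these eight rays so that, on each of the four prisms, the induced PL function is strictly convex for the chosen triangulation. Gluing is then automatic (a ray is a ray), but each prism imposes genuine linear inequalities on the six values it sees, and the four triangulations are not all of the same combinatorial type—so simultaneous satisfiability is not a formality and your compatibility discussion does not address it. The paper resolves this by brute force: it writes down, face by face, an explicit $\varphi$ as a minimum of three affine functions in coordinates $(c,q,\tau)$ on the slice $\{\tau_1+\tau_2=1\}$, and one checks directly that the four formulas agree on shared edges. There is no soft existence argument replacing this computation.
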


\begin{proof}
It suffices to produce a strictly convex piecewise linear function on the slice  $\{\tau_1 + \tau_2 =1\}$ as $\widetilde{\Sigma}_n$ (and $\Sigma_n$) are given by the cone over this slice. As none of the boundaries between regions are contained entirely in a hyperplane given by fixing $\tau_2 =: \tau$, it is enough to produce a such a piecewise linear function for fixed $\tau_2$. The (negative) of a function presented as the minimum of a collection of linear functions is convex. As the function needs to be strictly convex on the preimage of each cone in $\Sigma_1$, we present such a function on each face of the tetrahedron. 

\begin{itemize}
    \item On the $Y_1,Y_2,Y_3$ face, with $c$ defined as the coordinate direction from $Y_3$ to $Y_2$ and $q$ coordinate direction from $Y_2$ to $Y_1$, the function is given by $\min\{2c-q,-c+2q + 3 - 3\tau,2c-3q+2\tau\}$
    \item On the $Y_4,Y_3,Y_2$ face, with coordinate $c$ defined as the coordinate direction from $Y_2$ to $Y_3$ and $q$ defined as the coordinate direction from $Y_3$ to $Y_4$, the function is given by $\min\{-2c +q +2, c+q+2-3\tau, -2c+2+\tau\}$
\item On the $Y_1, Y_3, Y_4$ face, with $c$ defined as the coordinate direction from $Y_4$ to $Y_3$ and $q$ the coordinate direction from $Y_3$ to $Y_1$, the function is given by $\min\{\tau,-c+q+1,-q +2\tau\}$
\item On the $Y_1, Y_4, Y_2$ face, with $c$ defined as the coordinate direction from $Y_4$ to $Y_2$ and $q$ the coordinate direction from $Y_2$ to $Y_1$, the function is given by $\min\{2c -2q + \tau, -2c + 2q + 4-3\tau, 2c-3q +2\tau\}$.
\end{itemize}
\end{proof}

\subsubsection{Generalising to $X[n]$.}
We now use the fact that $X[1] \rightarrow X \times_{\AAA^1} \AAA^{2}$ to show that $X[n] \rightarrow X \times_{\AAA^1} \AAA^{n+1}$.

\begin{proposition}
    $X[n]\to X\times_{\AAA^1}\AAA^{n+1}$ is a projective morphism.
\end{proposition}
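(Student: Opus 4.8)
\subsection*{Proof proposal}

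The plan is to argue by induction on $n$, taking the case $X[1] \to X \times_{\AAA^1} \AAA^2$ just established as the base case and leveraging the stability of projective morphisms under composition and base change. Since $X \to \AAA^1$ is projective, $X \times_{\AAA^1}\AAA^{n+1} \to \AAA^{n+1}$ is projective, so once the statement is proven the section's goal that $X[n] \to \AAA^{n+1}$ is projective follows by composition.

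The key structural observation I would record first is that the subdivided cone complex $\widetilde{\Sigma}_n$ is obtained from $\widetilde{\Sigma}_{n-1}$ by \emph{one further iteration} of the $X[1]$-subdivision. Concretely, forgetting the last parameter direction and the last exceptional $\PP^1$-factor yields a coordinate projection $\Sigma(\AAA^{n+1}) \to \Sigma(\AAA^n)$, and
\[
\Sigma'_n := \widetilde{\Sigma}_{n-1} \times_{\Sigma(\AAA^n)} \Sigma(\AAA^{n+1})
\]
is an intermediate subdivision of $\Sigma_n$ corresponding geometrically to $X[n-1]\times_{\AAA^n}\AAA^{n+1}$; the finer complex $\widetilde{\Sigma}_n$ refines $\Sigma'_n$ by adding only the single extra vertex on the $Y_1$--$Y_2$ edge at distance $\tau_n$ from the previous one. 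Thus the modification factors as
\[
X[n] \longrightarrow X[n-1]\times_{\AAA^n}\AAA^{n+1} \longrightarrow X\times_{\AAA^1}\AAA^{n+1}.
\]
The right-hand map is the base change along $\AAA^{n+1}\to\AAA^n$ of $X[n-1]\to X\times_{\AAA^1}\AAA^n$, hence projective by the inductive hypothesis. For the left-hand map, near the newly created exceptional locus the family $X[n-1]\times_{\AAA^n}\AAA^{n+1}$ is étale-locally again three coordinate planes meeting transversely, now living over a base with one additional parameter $\tau_n$, and the extra subdivision is exactly the $X[1]$-subdivision in that direction. So the explicit strictly convex piecewise-linear functions built face-by-face in the proof of the previous lemma, transported to each copy of this local model and extended by a globally linear function elsewhere, should assemble into a piecewise-linear function on $\widetilde{\Sigma}_n$ that is strictly convex on $\phi^{-1}(\sigma)$ for every cone $\sigma$ of $\Sigma'_n$. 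The lemma above then gives projectivity of $X[n]\to X[n-1]\times_{\AAA^n}\AAA^{n+1}$, and composing the two projective maps finishes the induction.

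The main obstacle is the gluing in the last step: one must verify that the four local functions from the $n=1$ case can be placed at every copy of the three-plane model along the new exceptional edge and patched with a linear function on the remainder of $\widetilde{\Sigma}_n$ to produce a \emph{single} piecewise-linear function that is genuinely strictly convex on the preimage of each cone of $\Sigma'_n$ (regions untouched by the new subdivision cause no trouble, since their preimage is a single cone). An alternative that bypasses the factorisation is to exhibit such a function directly on $\widetilde{\Sigma}_n$ over the slice $\{\tau_1+\dots+\tau_{n+1}=1\}$ by summing suitably rescaled and translated copies of the $n=1$ functions, one for each of the $n$ added vertices; this is a one-step argument in form but requires the same fibrewise bookkeeping to check strict convexity cone by cone.
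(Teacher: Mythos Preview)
Your inductive framework and factorisation
\[
X[n] \longrightarrow X[n-1]\times_{\AAA^n}\AAA^{n+1} \longrightarrow X\times_{\AAA^1}\AAA^{n+1}
\]
match the paper's approach exactly: the right-hand map is a base change of the inductive hypothesis, and one must show the left-hand map is projective. The difference lies entirely in how the left-hand map is treated. You propose to re-run the piecewise-linear convexity argument on the refined cone complex, transporting the four face-by-face functions from the $n=1$ case and patching them globally. You correctly identify the obstacle---checking that these local functions assemble into a single piecewise-linear function which is strictly convex on every relevant preimage cone---but you do not resolve it.

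The paper sidesteps this gluing issue entirely. Since $X[1]\to X\times_{\AAA^1}\AAA^2$ is projective and birational, it is the blow-up in some ideal sheaf $\cI$. Pulling $\cI$ back to $X[n-1]\times_{\AAA^n}\AAA^{n+1}$ (equivalently, to $X[1]\times_{\AAA^1}\AAA^2$ in the $n=2$ notation) and blowing up yields a morphism which is projective by construction; one then checks in local coordinates that this blow-up is $X[n]$, using that the map $\AAA^{n+1}\to\AAA^n$ multiplies two adjacent coordinates. This is cleaner: projectivity comes for free from the blow-up description, and the only verification is an identification of schemes in charts, rather than a global convexity argument. Your alternative is plausible in principle but incomplete as written; the ideal-sheaf route closes the induction with no extra combinatorics.
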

\begin{proof}

We can do this by induction. For simplicity we show the $n=2$ case. 
Consider the cartesian diagrams 
\begin{equation*}
    \begin{tikzcd}
{X[1] \times_{\AAA^1} \AAA^2} \arrow[d] \arrow[r] & X \times_{\AAA^1} \AAA^3 \arrow[r] \arrow[d] & \AAA^3 \arrow[d, "{(t_1t_2, t_3)}"] \arrow[r, "{(t_1,t_2)}"] & \AAA^2 \arrow[d, "m"] \\
{X[1]} \arrow[r]           & X \times_{\AAA^1} \AAA^2 \arrow[r]           & \AAA^2 \arrow[r, "\mathsf{pr}_1"]                                           & \AAA^1               
\end{tikzcd}
\end{equation*}

    We have shown that the morphism $X[1] \to X\times_{\AAA^1}\AAA^2$ is projective. As it is birational, it is the blow up in some ideal sheaf $\cI$. Now consider the pullback of $\cI$ from $X \times_{\AAA^1} \AAA^2$ to ${X[1] \times_{\AAA^1} \AAA^2}$. The blow up in this ideal sheaf is therefore projective over $X[1] \times_{\AAA^1} \AAA^2$ and hence over $X \times_{\AAA^1} \AAA^3$ as the latter morphism is the base change of a projective morphism. As the morphism $\AAA^3 \rightarrow \AAA^2$ is given by $(t_1,t_2,t_3 )\mapsto (t_1t_2,t_3)$, we can see that the blow-up in the pull back ideal sheaf is in fact $X[2]$. Indeed, this can be seen by looking at the equations for the blow-up in the local patches.

    We may iterate this process by taking the fibre product $X[2]\times_{\AAA^1}\AAA^2$ given by the morphism $(t_3,t_4)\in \AAA^2 \mapsto (t_3t_4)\in\AAA^1$ and considering the following morphisms $\AAA^4\to \AAA^2$: $(t_1,t_2,t_3,t_4)\mapsto (t_1,t_2t_3t_4)$, $(t_1,t_2,t_3,t_4)\mapsto (t_1t_2,t_3t_4)$ and $(t_1,t_2,t_3,t_4)\mapsto (t_1t_2t_3,t_4)$
\end{proof}

\begin{corollary}
    The morphism $X[n] \rightarrow \AAA^{n+1}$ is projective.
\end{corollary}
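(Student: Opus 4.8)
\emph{Proof proposal.} The plan is to deduce this immediately from the preceding Proposition together with the projectivity of the input degeneration, using that projective morphisms are stable under base change and composition. First I would recall that, by hypothesis, $\pi \colon X \to \AAA^1$ is projective; since projectivity is preserved under base change, the projection $X \times_{\AAA^1} \AAA^{n+1} \to \AAA^{n+1}$ (the base change of $\pi$ along $(t_1,\dots,t_{n+1}) \mapsto t_1 \cdots t_{n+1}$) is projective as well. On the other hand, the Proposition above establishes that $X[n] \to X \times_{\AAA^1} \AAA^{n+1}$ is projective. The morphism $X[n] \to \AAA^{n+1}$ is the composite of these two, so it remains only to see that the composition of projective morphisms is projective.

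Over the affine base $\AAA^{n+1}$ this last step can be made completely explicit, which is the form I would write it in. Projectivity of $X[n] \to X \times_{\AAA^1} \AAA^{n+1}$ furnishes a closed immersion $X[n] \hookrightarrow \PP^N \times (X \times_{\AAA^1} \AAA^{n+1})$ over $X \times_{\AAA^1} \AAA^{n+1}$ for some $N$, and projectivity of $X \times_{\AAA^1} \AAA^{n+1} \to \AAA^{n+1}$ furnishes a closed immersion $X \times_{\AAA^1} \AAA^{n+1} \hookrightarrow \PP^M \times \AAA^{n+1}$ over $\AAA^{n+1}$ for some $M$. Composing these and applying the Segre embedding $\PP^N \times \PP^M \hookrightarrow \PP^{NM+N+M}$ exhibits $X[n]$ as a closed subscheme of $\PP^{NM+N+M} \times \AAA^{n+1}$ over $\AAA^{n+1}$, which is precisely the assertion that $X[n] \to \AAA^{n+1}$ is projective.

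There is essentially no obstacle in this corollary: all of the substantive content has already been discharged in the Proposition (and in the two Lemmas preceding it, which reduce projectivity of $X[n] \to X\times_{\AAA^1}\AAA^{n+1}$ to the existence of a strictly convex piecewise linear function on the subdivided cone complex). The only point meriting a word of care is the convention for the term \emph{projective morphism}, which we take to mean admitting a closed immersion into a projective bundle over the base relative to which an ample line bundle is pulled back; with this convention the three facts invoked—stability under base change, stability under composition over a quasi-compact base, and the conclusion itself—are standard. The statement is therefore purely a matter of assembling the pieces.
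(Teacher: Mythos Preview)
Your argument is correct and is precisely the intended one: the paper states this as an immediate corollary of the preceding Proposition without further proof, and the only missing ingredient is that $X\to\AAA^1$ is projective by hypothesis, so $X\times_{\AAA^1}\AAA^{n+1}\to\AAA^{n+1}$ is projective by base change and the composite $X[n]\to\AAA^{n+1}$ is projective. Your explicit justification via the Segre embedding over the affine base is more detail than the paper gives, but the approach is the same.
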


\subsection{The semistable degeneration}

In this section we prove Theorem \ref{Theorem : main}. Namely, that taking LW stable length $m$ zero-dimensional subschemes in the above expansions leads to a good degeneration of hyperkählers with general fibre $\mathrm{Hilb}^m(\mathrm{K}3)$. 

The family $X[n]$ lives over the $(n+1)$-dimensional base $\AAA^{n+1}$ and contains copies of the same fibre. In order to reduce back down to a 1-dimensional base we quotient out by the torus action, locally described in Section \ref{Section : torus action}.

\begin{lemma}
    The action of the torus $\GG_m^n$ described in Section \ref{Section : torus action}, extends to an action on $X[n]$ for either degeneration.
\end{lemma}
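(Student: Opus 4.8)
The plan is to obtain the action on $X[n]$ by lifting, through the tower of blow-ups that defines $X[n]$, the obvious $\GG_m^n$-action on the fibre product $X \times_{\AAA^1} \AAA^{n+1}$. First I would check that the formula of Section \ref{Section : torus action} on the basis directions defines such an action: the assignment $(t_1,\dots,t_{n+1}) \mapsto (\tau_1 t_1, \tau_1^{-1}\tau_2 t_2, \dots, \tau_{n-1}^{-1}\tau_n t_n, \tau_n^{-1} t_{n+1})$ scales each $t_i$ by a character, and the product of these characters telescopes to $1$. Hence the multiplication map $\AAA^{n+1}\to \AAA^1$, $(t_i)\mapsto \prod_i t_i$, is equivariant for the trivial action on $\AAA^1$, and we obtain a $\GG_m^n$-action on $X\times_{\AAA^1}\AAA^{n+1}$ that is trivial on the $X$-factor.

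Next I would show, by induction on the stages of the tower, that each small blow-up producing $X[n]$ is equivariant for the action lifted so far, hence lifts again. Étale locally the centre of each successive blow-up is cut out by one of the original coordinates $x,y,z$ (or $w$, in the quartic case) or by one of the homogeneous coordinates $x_i^{(j)}, y_i^{(j)}$ introduced at an earlier stage — each of which is fixed or scaled by a character under the lifted action — together with one of the $t_i$; since rescaling generators does not change an ideal, every such centre is invariant. In the cube case the global centres are obtained by replacing $Y_i$ with $Y_{(i)}$ and are visibly invariant; in the quartic case they are the gluings of the four local modifications of Definition \ref{4 BUs}, which exist by Proposition \ref{gluing blow-ups}, and invariance may be checked on the étale-local model. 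By the universal property of blow-ups the action therefore propagates to $X[n]$.

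Finally I would identify the lifted action with the one recorded in Section \ref{Section : torus action} by a direct computation in the charts of Section \ref{Section : expanded degenerations}. On the exceptional locus of a blow-up with equation of the form $x_0^{(k)} t_\bullet = x\,x_1^{(k)}$ (resp.\ $y_0^{(k)} t_\bullet = y\,y_1^{(k)}$), the unique lift normalised so that $x_1^{(k)}$ (resp.\ $y_0^{(k)}$) is fixed is forced to scale $x_0^{(k)}$ (resp.\ $y_1^{(k)}$) by the corresponding character, which reproduces the formulas $(x_0^{(k)}:x_1^{(k)})\mapsto(\tau_k x_0^{(k)}:x_1^{(k)})$ and $(y_0^{(n+1-k)}:y_1^{(n+1-k)})\mapsto(y_0^{(n+1-k)}:\tau_k y_1^{(n+1-k)})$.

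I expect the main obstacle to be the quartic case, precisely because there the blow-up centres are not full irreducible components of $X_0$: one must verify that on the overlaps of the four local modifications the induced lifts agree, so that they glue to a genuine global action rather than a mere collection of local ones. I would handle this by observing that the action on the base $\AAA^{n+1}$ is already global, so only the compatibility of the lifts on exceptional loci is at issue; the delicate instance is the locus $x=y=0$, $z\neq 0$ where $\Delta_1^{(k)}$ and $\Delta_2^{(n+1-k)}$ are identified, and there both descriptions scale the relevant homogeneous coordinate by $\tau_k$, so the identification is equivariant and the local actions glue.
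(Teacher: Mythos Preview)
Your strategy---lift the global base action through the blow-up tower via invariant centres and the universal property---is sound and structurally parallel to the paper's proof: both reduce the question to checking that the locally defined actions on exceptional $\PP^1$'s glue. The paper carries out that gluing check combinatorially, encoding the $\GG_m$-action on each exceptional component as a coloured arrow on the tropical picture (red for $\tau$, green for $\tau^{-1}$) and verifying that no component receives conflicting arrows; this is done for the cube case via Figure~\ref{fig:global torus action} and for the quartic case by reading the arrows off the right-hand side of Figure~\ref{choice of subdiv}.

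Your final paragraph, however, checks the wrong overlap. The identification $\Delta_1^{(k)}=\Delta_2^{(n+1-k)}$ at $x=y=0$, $z\neq 0$ lives entirely inside a single \'etale-local chart and is already built into the formulas of Section~\ref{Section : torus action}; it is not where the quartic difficulty lies. The genuine issue is that the four local modifications of Definition~\ref{4 BUs} assign different $Y_i$ the roles of ``first'' and ``second'' blow-up at different corners, so an exceptional $\PP^1$ over an edge $Y_i\cap Y_j$ inherits its action from two adjacent corner charts, and those two descriptions must be compared. You acknowledge this (``on the overlaps of the four local modifications the induced lifts agree'') but then do not perform the check. You can close the gap either by doing the paper's arrow verification, or---more in the spirit of your approach---by observing that a lift of a group action through a proper birational morphism is unique (the two candidate lifts agree on the dense open where the blow-up is an isomorphism, hence everywhere by separatedness), so once Proposition~\ref{gluing blow-ups} glues the local blow-ups, the local lifts glue automatically.
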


\begin{proof}
    The torus action on the exceptional $\PP^1$ coordinates is induced by its action on the base $\AAA^{n+1}$ and the equations of the blow up. This makes it clear that $\GG_m^n$ acts on all exceptional coordinates in either of the expanded degeneration we study here. What remains to be shown is that the action on different affine patches glues in a compatible way.

    In the case of the degree 6 K3 surfaces, we blow up simultaneously both disjoint components labelled $Y_1$, followed by the same operation on the disjoint pair of components labelled $Y_2$. The first blow-up results in 4 new exceptional components appearing around each $Y_1$ component, given by $\PP^1 $-bundles over the intersection curves of $Y_1$ with its neighbouring components. We act on each $\PP^1$ by $\GG_m$. We represent this by red arrows through each exceptional component to signify the direction of the action as in Figure \ref{fig:global torus action}. When we do the blow-ups for the $Y_2$ component, we recall that one of the exceptional components is both a $\Delta_1$ and a $\Delta_2$ component. The same $\GG_m$ therefore acts on it, and as can be seen from the equations of the blow-ups, when a $\tau\in \GG_m$ acts on this $\Delta_1$, the corresponding $\tau^{-1}$ acts on this $\Delta_2$. We represent this in Figure \ref{fig:global torus action} by green arrows through the $\Delta_2$ exceptional components pointing outwards from the $Y_2$ component. A green arrow is to be understood by the above explanation as the inverse of a red arrow. We could also have drawn red arrows pointing inwards towards $Y_2$. Compatibility of the gluing for the torus action then comes down to asking that no exceptional component has arrows of the same colour pointing in different directions, or, equivalently, arrows of different colours pointing in the same direction.
    
    The colour-coding of Figure \ref{fig:global torus action} sets us up for the proof in the case of a maximally degenerate K3 surface of degree four. We consider again Figure \ref{choice of subdiv}. The same criterion about red and green arrows needs to apply here to the exceptional components, shown as red vertices in the left-hand picture. A study of the equations of the blow-ups shows that for each such exceptional node, the corresponding geometric component has red and green arrows going through it in the following way. Draw on each green or red edge going from a $Y_i$ vertex to an exceptional vertex an arrow pointing towards the exceptional; this describes the group action on that geometric exceptional component in the sense of the previous paragraph. On edges between two exceptional vertices, the arrow should be perpendicular to the edge and of the same colour and direction as the arrows entering both exceptional nodes. These edges correspond, on the geometric side, to a $\PP^1$ over a point. A red arrow means that the coordinates $[u:v]$ of this $\PP^1$ are acted upon by $\tau\in\GG_m$ by $[\tau u:v]$. Note that this is equivalent to a green arrow in the opposite direction through this edge, which signifies that the $\PP^1$ with coordinates $[v:u]$ is acted upon by $[v:\tau u]$. A green arrow in the same direction as the red would be incompatible because it would mean $[u:v]$ is acted upon by $[u:\tau v]$. A quick sketch of the geometric picture, adding green and red arrows according to this rule, shows that our above compatibility condition is satisfied.
\end{proof}

\begin{figure}
    \centering
    \begin{tikzpicture}[scale= 2]
  % First square
      \draw (0,0) -- (1,0) -- (1,1) -- (0,1) -- cycle;
      
      % Second square (adjacent to the right)
      \draw (1.5,0) -- (2.5,0) -- (2.5,1) -- (1.5,1) -- cycle;
      \draw (1,1) -- (1.25,1.25) -- (1.5,1);
      \draw (1.25,1.25) -- (-0.25, 1.25) -- (-0.25,-0.25);
      \draw (-0.25,-0.25) -- (1.25, -0.25) -- (2.75, -0.25) -- (2.75, 1.25) -- (1.25,1.25);
      \draw (-0.25, 1.25) -- (0,1);
      \draw (0,0) -- (-0.25, -0.25)
      (1,0) -- (1.25, -0.25)
      (1.25, -0.25) -- (1.5, 0)
      (2.5, 0) -- (2.75, -0.25)
      (2.5, 1) -- (2.75, 1.25)
      ;
      \draw[->, color=red, thick] (1,0.4) -- (1.5,0.4);
      \draw[<-, color=green, thick] (1,0.6) -- (1.5,0.6);
      \draw[->, color=red, thick] (0.5,1) -- (0.5,1.25);
      \draw[->, color=red, thick] (0,0.5) -- (-0.25,0.5);
      \draw[->, color=red, thick] (0.5, 0) -- (0.5, -0.25);
      \draw[->, color=green, thick] (2,0) -- (2,-0.25);
      \draw[->, color=green, thick] (2.5, 0.5) -- (2.75, 0.5);
      \draw[->, color=green, thick] (2,1) -- (2, 1.25);
    \end{tikzpicture}
    \caption{Partial picture of the torus action on a modified maximally degenerate K3 of dimension 6.}
    \label{fig:global torus action}
\end{figure}
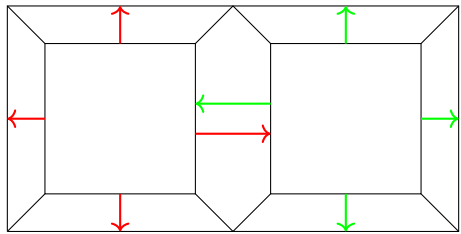

\begin{definition}
    For either degeneration, let $\frX = \lim_{\to} [X[n]/\sim]$ be the family over the stack of expansions $\mathfrak{C}$.
\end{definition}

\begin{definition}
    Let $\frM_{\mathrm{LW}}^m$ be the stack of LW stable length $m$ zero-dimensional subschemes in $\frX$.
\end{definition}

\begin{theorem}\label{Theorem : main}
    The stack of families of stable length $m$ zero-dimensional subschemes in $\mathfrak{X}$ over $C$ is Deligne-Mumford, proper over $C$ gives a good degeneration over $C$.
\end{theorem}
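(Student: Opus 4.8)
The plan is to deduce the theorem from the local results of \cite{CT} together with the globalisation carried out in the preceding subsections. Recall that \cite{CT} proves, in the \'etale-local model, that the stack of LW-stable length $m$ subschemes on $\frX \to \frC$ is Deligne--Mumford and proper over $C$, presented by the \'etale atlas $H^m_{[2m],\mathrm{LW}} \to \frM_{\mathrm{LW}}^m$ with $H^m_{[2m],\mathrm{LW}}$ the open LW-stable locus inside $\Hilb^m(X[n]/\AAA^{n+1})$. By Proposition \ref{gluing blow-ups} and Definition \ref{Definition : X[1]} the expansions $X[n]$, their $\GG_m^n$-actions (the Lemma above), the stack $\frC$ and the family $\frX$ are now defined globally for both the quartic and cube degenerations. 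Since commutation of the blow-ups, the description of the $\Delta$-components, and LW stability are all \'etale-local conditions, every local computation of \cite{CT} transfers verbatim to the global $\frX$; this is what lets us ``turn the crank''.

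First I would record the Deligne--Mumford and properness statements. The relative Hilbert scheme $\Hilb^m(X[n]/\AAA^{n+1})$ is projective over $\AAA^{n+1}$ because $X[n] \to \AAA^{n+1}$ is projective (the Corollary above), and the LW-stable locus is open in it. Taking the stack quotient by $\GG_m^n$ and the $2$-colimit over $n$ yields an algebraic stack locally of finite type over $C$, and LW stability forces finite automorphisms, so it is Deligne--Mumford. For properness one checks separatedness and the valuative criterion, both \'etale-local on the base and on $\frM_{\mathrm{LW}}^m$; after base change to a trait these reduce to the existence of a unique limiting LW-stable subscheme inside some $X[n]$, which is precisely the local content of \cite{CT}. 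Here one uses the point stressed after Definition \ref{Def: LW stab}: the rigidly-controlled combinatorics of the blow-ups make this limit exist and be unique with no auxiliary stability condition. Combined with finite type, this gives properness over $C$.

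It then remains to verify the three conditions of Definition \ref{defn : semistable}. For semistability I would exhibit the \'etale atlas of $\frM_{\mathrm{LW}}^m$ obtained from $H^m_{[2m],\mathrm{LW}}$: its total space is smooth because $X[n] \to \AAA^{n+1}$ is smooth along the support of every LW-stable subscheme (LW stability places the points in the smooth locus, away from the codimension-two singular strata), so $\Hilb^m$ of this surface fibration is smooth there, and the $\GG_m^n$-quotient is then smooth as a stack. Reducedness and the simple normal crossings structure of the special fibre are read off from the $\Delta$-component stratification: boundary divisors correspond to geometric components of configurations of points, and their intersections are transverse by construction of $X[n]$. For the relative logarithmic $2$-form, I would start from the fact that $\cX \to \AAA^1$ is itself a good type III degeneration of K3 surfaces, carrying a nowhere-vanishing $\omega \in H^0(\cX,\Omega^2_{\cX/\AAA^1}(\log \cX_0))$; this pulls back along the toroidal (log-\'etale) blow-ups to $X[n]$ and, via the usual relative construction of the symplectic form on the Hilbert scheme of points of a family of surfaces, induces a $\GG_m^n$-invariant relative log $2$-form on $\Hilb^m(X[n]/\AAA^{n+1})$ whose $m$-th exterior power is nowhere zero on the LW-stable locus; it descends to $\omega_\pi$ on $\frM_{\mathrm{LW}}^m$. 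Finally, over $C \setminus \{0\}$ the family $\frX \to \frC$ is just $\cX \to \AAA^1$, so the general fibre of $\frM_{\mathrm{LW}}^m \to C$ is $\Hilb^m$ of a smooth K3 surface, which is irreducible holomorphic symplectic by Beauville \cite{Beauville}.

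The main obstacle I expect is the relative logarithmic symplectic form: one must show that the symplectic form on $\Hilb^m(\mathrm{K3})$, taken relatively over the degeneration, acquires exactly logarithmic --- and no worse than logarithmic --- poles along the normal crossings special fibre of $\frM_{\mathrm{LW}}^m$, and that its top wedge power remains nowhere vanishing there. This is a local calculation on the $\Delta$-components and on how $\Hilb^m$ interacts with them, and it is precisely where the asymmetric choice of blow-ups of \cite{CT} and the projectivity established above do their work.
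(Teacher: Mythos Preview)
Your treatment of the Deligne--Mumford and properness claims is essentially the paper's own: both defer to the local analysis of \cite{CT}, and the globalisation of the expansions and torus action carried out in this section is exactly what makes that deferral legitimate.

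The semistability argument, however, has a genuine gap. You propose to use $H^m_{[2m],\mathrm{LW}}$ as an atlas and argue smoothness and normal crossings from there, treating $\frM_{\mathrm{LW}}^m$ as the $\GG_m^n$-quotient. But $\frM_{\mathrm{LW}}^m$ is \emph{not} simply $[H^m_{[2m],\mathrm{LW}}/\GG_m^{2m}]$: the equivalence $\sim$ of \cite[Section 5.2]{CT} includes additional identifications beyond the torus action, introduced precisely to ensure separatedness. These identify subschemes lying in different fibres of $X[2m]\to\AAA^{2m+1}$ over points with the same base codimension, and you cannot read off the snc structure of the special fibre without controlling how these extra identifications interact with the $\Delta$-stratification. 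The paper resolves this with a nontrivial additional step: it introduces the notion of \emph{maximal limits} (Lemma \ref{lemma: max limits} and Corollary \ref{cor: max limits}) and shows that every equivalence class has a unique representative with maximal limits up to the $\GG_m^{2m}$-action. This allows $\frM_{\mathrm{LW}}^m$ to be rewritten as a genuine global torus quotient of the maximal-limits locus, whereupon the orbits are closed and Luna's \'etale slice theorem (following \cite{gulbrandsen2021geometry}) produces a semistable atlas. Your sketch contains no analogue of this reduction, and without it the claim that the boundary is snc is unsupported.

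For the relative logarithmic $2$-form you correctly identify the issue and are candid that it is where the real work lies; the paper does not carry this out either but cites \cite[Proof of Theorem 6.4]{CT2}.
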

\begin{proof}
The proof of properness and finite stablisers will follow exactly as in \cite[Section 6.1]{CT}. On the other hand, in order to show semistability we will use the argument of \cite[Lemma 5.4, Lemma 7.3]{gulbrandsen2021geometry}. Here, the authors use the fact that the stack can be presented as a global quotient stack by the torus when restricting from LW to the GIT stability condition. In our situation the analogue is that if a length $m$ zero dimensional stable subscheme in $\mathrm{Hilb}^m(X[2m]/\AAA^{2m+1})$ has maximal limits, see Lemma \ref{lemma: max limits} and Corollary \ref{cor: max limits}, then it is the unique representative of the element in $\frM_{\mathrm{LW}}^m$ amongst those identified by the equivalences \cite[Section 5.2]{CT}. Consequently, one can use \cite[Lemma 6.2.1]{CT} to identify $\frM^{m}_{\mathrm{LW}}$ with a global quotient stack by the torus $\GG_m^{2m}$. Moreover, as the orbits are closed, we can use Luna's \'etale slice theorem to produce an atlas which is semistable. This follows from the fact that the morphism from the locus with maximal limits inside $H^m_{[2m],\mathrm{LW}}$ to $\AAA^{2m+1}$ is smooth, and the morphism $\AAA^{2m+1} \rightarrow \AAA^1$ is the product of the coordinates. Finally, the existence of the relative logarithmic 2-form follows from \cite[Proof of Theorem 6.4]{CT2}. 
\end{proof} 

\begin{definition}
    We say that $Z \in H^m_{[2m],\mathrm{LW}}$ has \emph{maximal limits} if each $P_i \in \mathrm{Supp}(Z)$ with multiplicity $m_i$ is in the interior of $m_i$ consecutive $\Delta_1$-components and $m_i$ consecutive $\Delta_2$ components which are unique to it. These $\Delta_2$ components must not equal any of the $\Delta_1$ when all components are expanded out (i.e.\ they must not share a $\GG_m$- action with any of the $\Delta_1$ components). In other words, $Z$ has the maximal amount of stable degenerations.
\end{definition}

Recall that the isomorphisms between objects of the stack $\mathfrak{X}$ and thus of $\mathfrak{M}_{\mathrm{LW}}^m$ were given by the torus action and some isomorphisms defined in \cite[Section 5.2]{CT}. The purpose of these isomorphisms was to identify any two fibres of $X[n] \to \AAA^{n+1}$ which had the same base codimension. Indeed, given our stability conditions, it can occur that two such fibres contain identical length $m$ 0-dimensional subschemes, leading a family of such subschemes to have more than one limit, thus breaking separatedness. These elements were identified within an equivalence class in order to avoid such an issue. In the following lemma, we will show that restricting our stability condition to only include subschemes with maximal limits allows us to remove these equivalences.
\begin{lemma}\label{lemma: max limits}
    Take any equivalence class $[(Z,\mathcal{X})]\in \mathfrak{M}_{\mathrm{LW}}^m$, where $Z$ is a LW stable length $m$ 0-dimensional subscheme of the fibre $\mathcal{X}$ of $\mathfrak{X}\to \mathfrak{C}$. Any representative of this equivalence class which lies in $\mathrm{Hilb}^m(X[2m]/\AAA^{2m+1})$ and has maximal limits has  exactly the same vanishing coordinates in $\AAA^{n+1}$.
\end{lemma}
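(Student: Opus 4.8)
The plan is to track the vanishing set $S(\mathbf t) = \{\, i : t_i = 0\,\} \subseteq \{1,\dots,2m+1\}$ of a base point $\mathbf t \in \AAA^{2m+1}$ under the equivalences defining $\frM^m_{\mathrm{LW}}$, and to show that among representatives with maximal limits it cannot change. Recall from \cite[Section 5.2]{CT} that these equivalences are generated by two moves: the $\GG_m^{2m}$-action of Section~\ref{Section : torus action}, and the additional isomorphisms that identify two fibres of $X[2m]\to\AAA^{2m+1}$ of the same base codimension. The first move is harmless: the torus rescales each coordinate $t_i$ by a unit (see the displayed formula in Section~\ref{Section : torus action}), so it fixes $S(\mathbf t)$ on the nose. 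Since the base codimension $|S(\mathbf t)|$ is already an invariant of the class (both moves preserve it), everything reduces to showing that an additional isomorphism of the second kind relating two maximal-limit representatives preserves $S$, not merely $|S|$.

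For this I would set up the explicit dictionary, implicit in the defining equations of $X[2m]$ recalled in Section~\ref{Section : expanded degenerations}, between a base point $\mathbf t$ and the configuration of $\Delta$-components appearing in the fibre $X[2m]_{\mathbf t}$: each vanishing coordinate $t_i$ turns on one further stage of the $\Delta_1$-chain or the $\Delta_2$-chain, subject to the asymmetric identifications $\Delta_1^{(k)}=\Delta_2^{(2m+1-k)}$ noted after the definition of $\Delta$-components. An additional isomorphism $\phi$ between fibres $X[2m]_{\mathbf t^{(1)}}$ and $X[2m]_{\mathbf t^{(2)}}$ induces an identification of the two $\Delta_1$-chains and of the two $\Delta_2$-chains, matching their incidences and carrying $Z$ to $Z$; a priori this could relabel the stages, which is exactly why $S$ is not an invariant for general representatives. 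The point of the maximal-limits hypothesis is that it removes this freedom: when $Z$ has maximal limits, each support point $P$ of multiplicity $m_P$ occupies a run of $m_P$ consecutive $\Delta_1$-stages and a run of $m_P$ consecutive $\Delta_2$-stages, all unique to $P$ and with no used $\Delta_1$-stage identified with a used $\Delta_2$-stage. Because $\sum_P m_P = m$ and the chains have length $2m$, the complementarity constraint $\Delta_1^{(k)}\neq\Delta_2^{(2m+1-k)}$ together with maximality pins the occupied stages down to a configuration determined purely by the combinatorial type of $Z$ — which component of $X_0$ each $P$ degenerated out of, with what multiplicity. Since $\phi$ preserves this combinatorial type by construction, it must match the occupied, labelled stages on the two sides, and then the dictionary forces $S(\mathbf t^{(1)}) = S(\mathbf t^{(2)})$.

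The main obstacle I anticipate is precisely the bookkeeping in the middle paragraph: one has to verify, from the equations of Section~\ref{Section : expanded degenerations}, that the passage from $\mathbf t$ to the labelled $\Delta$-configuration is a genuine bijection once maximality has removed the slack, and in particular that the loci where a $\Delta_1$-component and a $\Delta_2$-component coincide — the source of the construction's asymmetry — are accounted for compatibly on both sides of $\phi$. A clean way to organise this is to prove the stronger statement that a maximal-limit subscheme of a fixed combinatorial type determines its base point $\mathbf t$ up to the $\GG_m^{2m}$-action alone, so that no additional isomorphism is ever needed between two maximal-limit representatives; the lemma then follows since the torus fixes $S$. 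Once this dictionary is in place the conclusion is immediate, and it is exactly what is used to present $\frM^m_{\mathrm{LW}}$ as a global quotient by $\GG_m^{2m}$ in the proof of Theorem~\ref{Theorem : main}.
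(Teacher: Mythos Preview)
Your plan is correct and matches the paper's approach: both reduce to showing that the combinatorial type of a maximal-limit subscheme (which components of $X_0$ the points sit in, with what multiplicities) determines the vanishing set $S(\mathbf t)$, so that the additional isomorphisms of \cite[Section~5.2]{CT} cannot move it. The paper carries out exactly the bookkeeping you flag as the obstacle, but concretely rather than structurally: working in the local patch around $Y_1\cap Y_2\cap Y_3$, it lets $a_1,a_2,a_3$ be the number of support points (counted with multiplicity) in $Y_1,Y_2,Y_3$, and counts how many $\Delta_1$- and $\Delta_2$-stages must be absorbed into each $Y_i$ for the maximal-limit condition to hold; the upshot is that the $2m$ exceptional components split among the $Y_i$ only when $t_{2a_2+a_3}=0$, which pins down $S(\mathbf t)$ from $(a_1,a_2,a_3)$ alone. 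Your ``stronger statement'' in the last paragraph is precisely Corollary~\ref{cor: max limits}, which the paper states and proves separately.
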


\begin{proof}
    
    First, we will assume that $\mathcal{X}$ is a copy of $X_0$. We will prove this in the local situation on the affine patch around $Y_1\cap Y_2\cap Y_3$ and later explain how it generalises. We consider the support of $Z$ as $m$ distinct points. If a point has multiplicity $k>1$, we may treat it as $k$ points lying very close together for the purposes of this proof, as the concept of maximal limits treats them in the same way. Let $a_1$, $a_2$ and $a_3$ be the number of points lying in the $Y_1$, $Y_2$ and $Y_3 $ components of $X_0$ respectively with $a_1+a_2+a_3=m$. For $Z$ to have maximal limits each point must lie in its own associated $\Delta_1$ and $\Delta_2$ components, which do not share a $\GG_m$-action. When the subscheme is maximally degenerated, this $\Delta_1$ and $\Delta_2$ will form the $\PP^1\times\PP^1$ box in which the point lands. It will be acted on by a $\GG_m^2$ and no other points will be acted upon by the same action. This means that $Y_1$ in $X_0$ must have coordinates proportional to those of $2a_1$ different $\Delta_1=\Delta_2$ exceptional $\PP^1$ coordinates (they must be equal in this locus). Similarly, $Y_2$ in $X_0$ has coordinates proportional to those of $2a_2$ different $\Delta_1=\Delta_2$ exceptional $\PP^1$ coordinates. Now, the $a_3$ points in $Y_3$ must also be able to degenerate to distinct boxes which share no group action with the others. It follows that the coordinates of $Y_3$ must be proportional to the exceptional coordinates of $a_3$ different $\Delta_1$ components, which in $X_0$ lie in the $Y_2\cup Y_3$ locus. Similarly, $Y_3$ has coordinates proportional to $a_3$ $\Delta_2$ components which lie in the $Y_1\cup Y_2$ locus of $X_0$. In total, $Y_1$ contains $2a_1+a_3$ copies of $\Delta_1=\Delta_2$, the $Y_2$ component contains $2a_2 +a_3 $ of them, and $Y_3$ contains $2a_2+a_3$ copies of $\Delta_1$ along with $2a_1 +a_3$ copies of $\Delta_2$. These numbers sum to $2m$ $\Delta$ components. The $2m$ $\Delta$-components (i.e.\ sets of exceptional coordinates) can only be split into $Y_1$, $Y_2$ and $Y_3$ when $t_{2a_2 + a_3} =0$.
    A similar argument can be made if $\mathcal{X}$ is a different modification of $X_0$.

    We now consider a global model for $X_0$ or modifications thereof, where all points of the support of $Z$ do not lie in the same affine patch. We assign $\Delta$-components to the $Y_i$ to obtain the property of maximal limits as described above. As the local modifications were shown to glue to a global blow-up with a global torus action, in the complement of each $Y_i$, we still have three $Y_j$ components with coordinates proportional to a collection of $\Delta$ components which add up to the total number ($2m$) of distinct $\Delta$ components.
\end{proof}

\begin{remark}
    We note that a given configuration of points (viewed only as support, forgetting their subscheme structure) in $X_0$ can degenerate to different maximal limits. For $m=2$, for example, 2 points in $Y_1$ can degenerate to 3 different stable limits in the fibre above $t_1=\dots=t_{5}=0$. This choice of stable limit is encoded in the order of the $a_3$ components in $Y_1$ and $Y_2$.
\end{remark}

\begin{corollary}\label{cor: max limits}
    Every equivalence class $[(Z,\mathcal{X})]$, as defined in Lemma \ref{lemma: max limits}, has a unique representative in $H^m_{[2m],\mathrm{LW}}$ with maximal limits up to $\GG_m^{2m}$-action.
\end{corollary}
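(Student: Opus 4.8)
The plan is to derive the corollary directly from Lemma \ref{lemma: max limits}, using the explicit $\Delta$-component bookkeeping carried out in its proof for the existence half, and a rigidity argument for the uniqueness half.

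\emph{Existence.} Starting from an arbitrary representative $(Z,\mathcal{X})$ of the given class, I would first pass, via the equivalences built into the direct limit $\mathfrak{C}=\lim_{\to}[C[n]/\sim]$, to a sufficiently expanded model without leaving the equivalence class; since expanding moves the support of $Z$ off the singular locus and keeps the automorphisms finite, LW-stability is preserved. Writing $m_i$ for the multiplicity of the support point $P_i$ and $a_j$ for the number of points (counted with multiplicity) lying on each two-dimensional component $Y_j$ of $\mathcal{X}$, the proof of Lemma \ref{lemma: max limits} prescribes exactly which $\Delta_1$- and $\Delta_2$-components to attach to each $Y_j$ so that every $P_i$ acquires its own chain of $m_i$ consecutive $\Delta_1$- and $m_i$ consecutive $\Delta_2$-components sharing no $\GG_m$-action. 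The total number of $\Delta$-components so produced is precisely $2m$, so the resulting expansion is a fibre of $X[2m]\to\AAA^{2m+1}$; placing each $P_i$ in the interior of the box it has been assigned yields a representative lying in $H^m_{[2m],\mathrm{LW}}$ with maximal limits.

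\emph{Uniqueness.} Suppose $Z_1,Z_2\in H^m_{[2m],\mathrm{LW}}$ both have maximal limits and represent the same class. By Lemma \ref{lemma: max limits} they have the same vanishing coordinates in $\AAA^{2m+1}$; since the $\GG_m^{2m}$-action of Section \ref{Section : torus action} is compatible with the product map $\AAA^{2m+1}\to\AAA^1$ and acts transitively on the non-vanishing coordinates of a fixed stratum subject to the product constraint, after translating by the torus we may assume $Z_1$ and $Z_2$ sit in one and the same fibre $X[2m]_s$. An isomorphism in $\frM_{\mathrm{LW}}^m$ carrying $Z_1$ to $Z_2$ is then an automorphism of the expansion $X[2m]_s$ taking $Z_1$ to $Z_2$; the group of such automorphisms is generated by the stabiliser of $s$ in the torus together with the additional identifications of \cite[Section 5.2]{CT}, and the latter only match up fibres of equal base codimension. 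On a fibre carrying a subscheme with maximal limits, each support point occupies the unique box determined by its label and by the ordering of the $a_j$-chains, so such an identification must fix every box and is therefore already realised by the torus. Hence $Z_1$ and $Z_2$ differ by a $\GG_m^{2m}$-translation, which is the claim; this is also precisely the input required by \cite[Lemma 6.2.1]{CT} in the proof of Theorem \ref{Theorem : main}.

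The step I expect to be the main obstacle is controlling the \cite[Section 5.2]{CT} identifications on the maximal-limit locus, i.e.\ showing that the rigidity imposed by ``each point in a box unique to it, not sharing a $\GG_m$-action'' leaves no room for an identification that is not already a torus element. Concretely, this amounts to checking --- from the defining equations of the small blow-ups --- that the combinatorial type of a maximal-limit subscheme (the assignment of support points to boxes together with the ordering of the $\Delta$-chains) is a complete invariant of its class within $H^m_{[2m],\mathrm{LW}}$, so that no two distinct combinatorial types can be identified and within a single type the only ambiguity is the $\GG_m^{2m}$-action.
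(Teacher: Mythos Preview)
Your proposal is broadly on the right track, but it diverges from the paper's argument in the uniqueness half and is slightly muddled in the existence half.

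For uniqueness, the paper is more direct than your route through automorphisms of a fixed fibre. Rather than trying to show that the identifications of \cite[Section 5.2]{CT} act trivially modulo the torus on the maximal-limit locus --- the step you correctly flag as the obstacle and do not resolve --- the paper observes that the only apparent ambiguity beyond the vanishing pattern is the \emph{ordering} of the $\Delta$-components assigned to the support points (in the notation of the Lemma's proof, the order of the $a_3$ exceptional coordinates amongst the $a_1$ and $a_2$). The key point is that this ordering is simply not recorded by the fibre $\mathcal{X}$: different orderings give the same number of exceptional coordinates proportional to each irreducible surface in $\mathcal{X}$, hence the same scheme-theoretic fibre of $X[2m]\to\AAA^{2m+1}$. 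Since the class fixes $Z$ and its support inside $\mathcal{X}$, the representative is then determined outright by the vanishing pattern (pinned down by Lemma~\ref{lemma: max limits}) together with the non-vanishing base coordinates (a single $\GG_m^{2m}$-orbit). No analysis of fibre automorphisms or of the extra identifications is required, so the obstacle you anticipate never arises.

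For existence, the paper does not give a separate argument. What you write is reasonable in spirit, but the phrasing --- ``attach $\Delta$-components'' and ``placing each $P_i$ in the interior of the box it has been assigned'' --- reads as though you are building a new expansion and moving the points. The cleaner formulation, and the one implicit in the Lemma's bookkeeping, is that one simply realises the \emph{given} $\mathcal{X}$ as the fibre of $X[2m]$ over a base point whose single vanishing coordinate is the one dictated by the count $2a_2+a_3$ (and its analogues for deeper $\mathcal{X}$), with $Z\subset\mathcal{X}$ left untouched; the ``boxes'' are then automatically the collapsed $\Delta$-components whose coordinates are proportional to those of the relevant $Y_i$.
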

\begin{proof}
    In Lemma \ref{lemma: max limits}, we have shown that for such an equivalence class all representatives with maximal limits lie in the same scheme-theoretical fibre over $\AAA^{2m+1}$. Moreover, we have shown that the only way in which they can differ is in the choice of assignment of $\Delta$-components to points, i.e. the order of the $a_3$ components among the $a_1$ and $a_2$ (in the notation of the proof of Lemma \ref{lemma: max limits}). This choice determines different limits to which the points can degenerate, but the fibre $\mathcal{X}$ itself does not see this ordering. Each irreducible surface in $\mathcal{X}$ has coordinates proportional to a given number of exceptional coordinates. This number remains the same no matter what ordering we have placed on the $a_3$ sets of coordinates among the others. As all representatives of $[(Z,\mathcal{X})]$ have the same scheme structure on $Z$ and support within $\mathcal{X}$, the representatives differ only in terms of the scheme theroretical fibre in which they lie. There is therefore a unique representative with maximal limits for each equivalence class.
\end{proof}

\begin{remark}
    The GIT condition of \cite{GHH} cuts out exactly the subschemes with maximal limits, which is why they could write their solution as a quotient stack.
\end{remark}

The degenerations of $\mathrm{Hilb}^2(\mathrm{K3)}$ resulting from applying our above constructions to degenerations of degree 4 and 6 K3 surfaces are denoted by $Q\to \AAA^1$ and $C\to \AAA^1$ respectively.

\begin{corollary}
    The degenerations $Q\to \AAA^1$ and $C\to \AAA^1$ are good type III degenerations of hyperk\"ahler fourfolds.
\end{corollary}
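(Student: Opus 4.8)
The plan is to combine the three main results already established in the paper. First, by Theorem \ref{Theorem : main}, for $m=2$ each of the stacks $Q \to \AAA^1$ and $C \to \AAA^1$ is Deligne--Mumford, proper over $C = \AAA^1$, and is a good degeneration of irreducible holomorphic symplectic varieties in the sense of Definition \ref{defn : semistable}; in particular the generic fibre is $\mathrm{Hilb}^2(\mathrm{K3})$, a hyperk\"ahler fourfold, so these are genuinely degenerations of hyperk\"ahler fourfolds. What remains is only to identify the \emph{type}, i.e.\ to compute the nilpotency index $\nu$ of the monodromy operator $N$ on $H^2$ of a nearby fibre, or equivalently, by the theorem of Koll\'ar--Laza--Sacc\'a--Voisin quoted above, to compute the dimension of the dual complex of the special fibre.

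I would argue via the dual complex. Since these are good (hence minimal dlt) semistable degenerations, part (1) of the Koll\'ar--Laza--Sacc\'a--Voisin theorem applies: the degeneration is of type III precisely when $\dim |\Delta| = 2n = 4$. Now Theorem~B of the introduction, proved in Section \ref{section:dual complex}, asserts that for $m=2$ the dual complex $\Pi_0$ of the special fibre of $\frM^m_{\mathrm{LW}}$ is a $4$-dimensional delta-complex, with explicit face counts recorded in Theorem \ref{count quartic} (for $Q$) and Theorem \ref{count cube} (for $C$) --- in particular each contains $4$-simplices, so $\dim|\Pi_0| = 4$. Applying the Koll\'ar--Laza--Sacc\'a--Voisin dichotomy with $n=2$, this forces $\nu = 3$, i.e.\ both degenerations are of type III. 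Alternatively, one can observe directly that the dual complex of the special fibre of the underlying K3 degeneration $\cX \to \AAA^1$ is a triangulated $2$-sphere (Section \ref{Section : K3 degenerations}), hence the K3 degeneration is already of type III, and the relative Hilbert scheme construction preserves maximal unipotency of the monodromy; but the cleanest route is simply to cite the computed dimension of $\Pi_0$.

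The only genuine content beyond bookkeeping is checking the hypotheses of the Koll\'ar--Laza--Sacc\'a--Voisin theorem, namely that $Q \to \AAA^1$ and $C \to \AAA^1$ are \emph{minimal} dlt degenerations. Semistability (smooth total space, reduced SNC special fibre) is exactly what Theorem \ref{Theorem : main} provides at the level of an \'etale atlas, so the total spaces are smooth and the special fibres are reduced SNC divisors; triviality of the relative canonical is built into the definition of a good degeneration via the nowhere-vanishing relative log $2$-form $\omega_\pi$ with $\wedge^n\omega_\pi \neq 0$, which gives $K_{\cX/C} = 0$ and hence $K_{\cX} = \cX_0$, so the log canonical form $K_{\cX} + \cX_0$ is trivial and the pair is already minimal (there is nothing to contract). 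I expect this verification to be the main --- though still routine --- obstacle: one must make sure the stacky formulation of ``semistable'' in the paper's sense feeds correctly into the scheme-theoretic hypotheses of \cite{kollar2018remarks}, possibly passing to the coarse space and noting it has at worst quotient (hence dlt) singularities, as flagged in the remark after Definition \ref{defn : semistable}. Once that is in place, the type III conclusion is immediate from the dimension count.
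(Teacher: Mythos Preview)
The paper gives no explicit proof of this corollary; it is stated immediately after Theorem~\ref{Theorem : main} and before Section~\ref{section:dual complex}, so the intended justification cannot rely on the dual-complex computations. The implicit argument is the one you relegate to an ``alternative'': goodness is Theorem~\ref{Theorem : main}, and the type is read off directly from the monodromy definition. Since $H^2(\mathrm{Hilb}^m(S)) \cong H^2(S) \oplus \ZZ\delta$ with the monodromy acting trivially on the exceptional class $\delta$, the nilpotency index on $H^2$ of the general fibre equals that of the underlying K3 degeneration, and the quartic and cube degenerations of Section~\ref{Section : K3 degenerations} are type III by construction.

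Your primary route through the dimension of $\Pi_0$ and the Koll\'ar--Laza--Sacc\`a--Voisin theorem is not wrong, but it is both circular in exposition (it invokes Section~\ref{section:dual complex}, which comes after the corollary) and burdened by exactly the technical point you flag: KLSV is stated for minimal dlt degenerations of schemes, and feeding a semistable Deligne--Mumford stack into it requires passing to the coarse space and checking the dlt and minimality conditions survive. That is doable but is real work, whereas the monodromy argument needs none of it. I would swap the emphasis: lead with the $H^2$ computation, and mention the dual-complex confirmation only as a consistency check once Section~\ref{section:dual complex} is in hand.
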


\section{Two dual complex computations}\label{section:dual complex}
\subsection{General results}
Note that the dual complexes we study here are delta-complexes, as opposed to simplicial complexes, i.e.\ each simplex is not uniquely determined by its vertices.

%\Calla{Add Brown-Mazzon result?}

\begin{proposition}
    Let $Y\to \AAA^1$ be either of the degenerations of Hilbert schemes of 2 points on K3 surfaces constructed in previous sections. The dual complex $\Pi_0$ of its special fibre $Y_0$ is a 4-dimensional delta-complex whose $k$-dimensional strata correspond to components of base codimension $5-k$ in the expanded degeneration construction.
\end{proposition}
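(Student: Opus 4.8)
The plan is to assemble the statement from three inputs established above. First, by Theorem~\ref{Theorem : main} the morphism $Y\to\AAA^1$ is semistable, so the special fibre $Y_0$ is a reduced simple normal crossings divisor and $\Pi_0\coloneqq\Delta(Y_0)$ is well defined in the sense of Definition~\ref{DEf dual complex}. Second, $Y\to\AAA^1$ is a good type III degeneration of hyperk\"ahler fourfolds (Section~\ref{Section : Globalising}), so the Koll\'ar--Laza--Sacc\'a--Voisin theorem \cite{kollar2018remarks} applies in the case $2n=4$ and gives $\dim|\Pi_0|=2n=4$ directly. Third, as in the proof of Theorem~\ref{Theorem : main} the stack $Y$ (which is $\frM^{2}_{\mathrm{LW}}$ for the relevant K3 degeneration) is the quotient by $\GG_m^{4}$ of the maximal-limits locus of $H^{2}_{[4],\mathrm{LW}}\subset\Hilb^{2}(X[4]/\AAA^{5})$; here $m=2$, so $2m+1=5$, and it is this presentation that will turn base-codimension combinatorics into simplices of $\Pi_0$.

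The core is the strata dictionary. By Corollary~\ref{cor: max limits}, every geometric point $y\in Y_0$ is represented by a unique $\GG_m^{4}$-orbit of an LW-stable, maximal-limits subscheme inside $\Hilb^{2}(X[4]/\AAA^{5})$, so its base codimension $j\in\{1,\dots,5\}$ is a well-defined function on $Y_0$. The first key claim is that this function records the simple normal crossings stratification: a point $y$ lies in exactly $j$ of the irreducible components of $Y_0$ if and only if its maximal-limits representative has base codimension $j$. This is proved by the local bookkeeping already used in Lemma~\ref{lemma: max limits}: for a configuration of two points lying in the interiors of a prescribed set of components of the expanded surface one reads off exactly which $\Delta$-components are forced to appear, hence exactly which coordinates $t_i$ must vanish, and compares with the coordinate-hyperplane stratification of $\{t_1\cdots t_5=0\}\subset\AAA^{5}$. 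The second key claim is a dimension count: the morphism from the maximal-limits locus of $H^{2}_{[4],\mathrm{LW}}$ to $\AAA^{5}$ is smooth (used in the proof of Theorem~\ref{Theorem : main}), so the base-codimension-$j$ locus upstairs is smooth of codimension $j$; taking the $\GG_m^{4}$-quotient preserves codimension, and since this locus sits inside $Y_0$ (which is codimension $1$ in the $5$-dimensional $Y$, cut out by $t_1\cdots t_5=0$) it has codimension $j-1$ in $Y_0$, i.e.\ dimension $5-j$.

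Combining the two claims: the connected components of the locally closed base-codimension-$j$ locus of $Y_0$ are exactly the connected components of the $j$-fold intersections of irreducible components of $Y_0$, equivalently the $(j-1)$-simplices of $\Pi_0$, and each of them has dimension $5-j$ in $Y_0$. Setting $k=5-j$, the $k$-dimensional strata of $Y_0$ are therefore in bijection with the base-codimension-$(5-k)$ components appearing in the expansion, and they index the $(4-k)$-simplices of $\Pi_0$; in particular $\Pi_0$ carries simplices exactly in dimensions $0$ through $4$, re-confirming $\dim\Pi_0=4$ (alternatively, $5$ is the largest number of the $t_i$ that can vanish simultaneously, and base codimension $5$ is realised by a maximally degenerate configuration, as one sees in the explicit computations below). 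Finally, $\Pi_0$ is a genuine delta-complex rather than a simplicial complex because the equivalences defining the stack of expansions \cite[Section 5.2]{CT} glue together fibres of $X[n]\to\AAA^{n+1}$ of equal base codimension, and this can identify distinct faces of a single intersection stratum, so a simplex of $\Pi_0$ need not be determined by its set of vertices; this is visible directly in the combinatorics of $\Pi_Q$ (Theorem~\ref{count quartic}) and $\Pi_C$ (Theorem~\ref{count cube}).

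The step I expect to be the main obstacle is the first key claim: that the simple normal crossings stratification of $Y_0$ literally coincides with the base-codimension stratification, together with the matching of connected components. This is the combinatorially heaviest part, since one must check, over every combinatorial type of a length-$2$ subscheme with maximal limits, exactly which components of the expanded surface it meets and exactly which $t_i$ vanish, and then verify that two such types lie in the same connected component of a $j$-fold intersection precisely when they can be joined by a path of maximal-limits configurations of base codimension at least $j$. This bookkeeping is essentially what the explicit description of $\Pi_Q$ and $\Pi_C$ in the rest of Section~\ref{section:dual complex} carries out in full.
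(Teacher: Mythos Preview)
Your proposal is correct in outline but takes a genuinely different route from the paper. You argue structurally: invoke semistability from Theorem~\ref{Theorem : main} to make $\Pi_0$ well defined, appeal to \cite{kollar2018remarks} for the dimension, and then use the global quotient presentation (maximal-limits locus in $H^{2}_{[4],\mathrm{LW}}$ modulo $\GG_m^{4}$) together with smoothness over $\AAA^{5}$ to identify the snc stratification of $Y_0$ with the base-codimension stratification. The paper instead gives a direct, constructive argument: it identifies vertices of $\Pi_0$ with configurations of two points supported in the interior of a single $X_0$-component, and then describes an explicit iterative procedure---place a new node, draw the three half-lines parallel to the boundary, and enumerate where the support can land stably---which produces exactly the base-codimension-$(k+1)$ specialisations of a base-codimension-$k$ configuration, hence the codimension-one incidences in $Y_0$, hence the edges, and so on up the simplicial skeleton. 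No appeal to \cite{kollar2018remarks} is made; the dimension $4$ falls out because the deepest stratum is base codimension $5$.

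What each approach buys: your route is cleaner logically and isolates exactly which ingredients are doing the work (semistability, smoothness over the base, the maximal-limits uniqueness of Corollary~\ref{cor: max limits}); it also makes transparent why the correspondence holds for any $m$, not just $m=2$. The paper's route, on the other hand, is what actually powers the explicit enumerations in Theorems~\ref{count quartic} and~\ref{count cube}: the subdivide-and-enumerate recipe is precisely the algorithm used there, so the proposition serves double duty as both a statement and a computational tool. Your acknowledged obstacle---checking that the snc stratification literally matches the base-codimension stratification and that connected components line up---is exactly what the paper's hands-on description makes visible, so if you want to carry your plan through you will end up redoing that combinatorics anyway.
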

\begin{proof}
   Vertices of $\Pi_0$ correspond to 4-dimensional components of $Y_0$. The points in the interior of theses 4-dimensional components are length 2 0-dimensional subschemes with support in the interior of a $Y_i$-component of $X_0$. The intersections of these 4-dimensional components of $Y_0$ with each other are represented by edges between the vertices on the dual side. Given a configuration of points on $X_0$ defining a 4-dimensional component of $Y_0$, we can find its possible intersections with other components in the following way. Take a stable configuration of points on $X_0$ and a union of affine pieces on the surface $X_0$ contanining it. For simplicity, let us imagine this is the affine patch around the triple intersection of $Y_1$, $Y_2$ and $Y_3$. Subdivide it by placing a node in the interior of either $Y_1$ or $Y2$, but not $Y_3$. Then draw 3 half-lines from this node, parallel to each of the intersection lines of $X_0$ in such a way that we obtain Picture 1 of Figure \ref{local subdivision}. If we are considering more than one affine patch, do this on each affine patch, such that the lines join up. Now, observe where the support of the length 2 subscheme may lie in this new picture and take all stable configurations. An examination of the local equations of the blow-ups shows that we have thus found all specialisations in base codimension 2 of our length 2 subscheme 0-dimensional subscheme. These define the intersections between 4-dimensional components of $Y_0$; two 4-dimensional components described by two different configurations of points in $X_0$ intersect in the 3-dimensional component described by their common specialisation.

   This reasoning can be iterated in the following way. Assume $X_0'$ is a modification of $X_0$ in base codimension $k$, i.e.\ we have blown up $Y_1$ and $Y_2$ $k$ times. Place a configuration of two points in $X_0'$ or a single double point. Now, as before, we place a node in either the interior of $Y_1$, $Y_2$ or a $\Delta_1=\Delta_2$ component. We again draw three half lines from this node with the same slopes as before. We observe where this configuration of points may land in this new picture and retain only the stable configurations. We can again see through studying the blow-up equations that this yields all possible specialisations of our configuration.

   We have described the situation on an affine patch, but this can be extended to the projective fourfold $Y_0$ by working on each affine patch and gluing together as before.
\end{proof}

\begin{remark}
    In \cite[Theorem 1.7.1]{brown2019essential} the authors prove that the dual complex of a type III, semistable degeneration of $\mathrm{Hilb}^m(\mathrm{K3)}$ is homeomorphic to $\mathbb{C}\PP^m$ and so the computations below should give triangulations of $\mathbb{C}\PP^2$.
\end{remark}

\subsection{Quartic degeneration}\label{quartic dual comp}
Let $\Pi_Q$ denote the dual complex of $Q_0$, the special fibre of $Q\to \AAA^1$.
\begin{theorem}\label{count quartic}
    The complex $\Pi_Q$ is made up of 10 vertices, 45 edges,  110 triangles, 120 tetrahedrons and 48 4-simplices.
\end{theorem}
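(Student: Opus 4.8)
The plan is to count the $k$-simplices of $\Pi_Q$ by exploiting the bijection, established in the preceding proposition, between $k$-dimensional strata of $\Pi_Q$ and components of base codimension $5-k$ in the expanded degeneration $Q\to\AAA^1$. So the task reduces to enumerating, for each base codimension $c\in\{1,2,3,4,5\}$, the irreducible components of $Q_0$ of that codimension, i.e.\ the combinatorial types of stable length-$2$ subschemes (up to the identifications of \cite{CT}) supported on expansions of $X_0$ that vanish in exactly $c$ coordinates. The vertices ($c=1$) should be the easiest case: a stable length-$2$ subscheme of base codimension $1$ is supported in the interior of a single $Y_i$-component of $X_0$ (either two distinct points or one double point), so the vertices are indexed by the $4$ faces $Y_i$ of the tetrahedron $X_0$ — but this gives only $4$, so in fact one must also account for the strata produced by the expansion at the $6$ edges $Y_i\cap Y_j$, where after one blow-up a $\Delta_1=\Delta_2$ component appears carrying points; $4+6=10$, matching the claim. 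I would make this bookkeeping precise using the flattened tetrahedron picture of Figure \ref{flattened picture} together with the local subdivision rule of Figure \ref{local subdivision}.

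The core of the argument is then an organized enumeration of the higher-codimension strata. I would proceed codimension by codimension, using the iterative description from the proof of the previous proposition: a stratum of base codimension $c$ is obtained from one of codimension $c-1$ by placing one further node (in the interior of $Y_1$, $Y_2$, or an existing $\Delta_1=\Delta_2$ component, following the asymmetric convention of \cite{CT}), drawing the three half-lines of the prescribed slopes, and reading off which stable configurations of the two points occur in the refined picture. The face relations of $\Pi_Q$ are exactly the specialization relations among these configurations, so the same enumeration simultaneously records incidences. In practice I expect the cleanest route is to stratify by the "shape" data: how the two points distribute among the components ($a_1+a_2+a_3=2$ locally, i.e.\ the three cases $(2,0,0)$, $(1,1,0)$, $(1,0,1)$ up to symmetry, plus the double-point variants), combined with, for each point, the length of its chain of $\Delta_1$- and $\Delta_2$-components and the cyclic/linear ordering data that Lemma \ref{lemma: max limits} and Corollary \ref{cor: max limits} show governs the distinct maximal limits. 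Summing the resulting counts against the symmetry of the tetrahedron $X_0$ (the group $S_4$ permuting the four faces $Y_1,\dots,Y_4$) should yield $45$ edges, $110$ triangles, $120$ tetrahedra, and $48$ four-simplices.

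As consistency checks I would verify that the face-number vector $(10,45,110,120,48)$ has Euler characteristic $10-45+110-120+48 = 3 = \chi(\CC\PP^2)$, as it must by the remark following the previous proposition (and by \cite{brown2019essential}), and that the link of each vertex and each top simplex behaves as expected for a triangulation of $\CC\PP^2$ — in particular that every $4$-simplex is a cone appropriately, and that the $120$ tetrahedra each lie on exactly two $4$-simplices (so $48\cdot 5 = 240 = 120\cdot 2$), which is a strong internal check on the top two counts. The count of $48$ four-simplices corresponds to the maximal polyhedral regions of the subdivided tetrahedron fibre in the codimension-$5$ locus $t_1=\dots=t_5=0$, which by the discussion around Figure \ref{fig:Sigma} equals the number of maximal cones of $\widetilde{\Sigma}_4$; cross-checking these two descriptions gives independent confirmation.

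\textbf{Main obstacle.} The hard part will be the enumeration itself: keeping track, without double-counting or omission, of all stable length-$2$ configurations on the iterated expansions across all four faces of the tetrahedron, correctly implementing the \emph{asymmetric} $\Delta_1$/$\Delta_2$ identification of \cite{CT} (which is what pins down exactly which configurations are distinct), and folding in the ordering data of Lemma \ref{lemma: max limits} that distinguishes otherwise-identical-looking strata. The gluing across faces — ensuring the half-lines drawn in adjacent affine patches join up consistently, as in Proposition \ref{gluing blow-ups} — is where spurious identifications or spurious extra strata are most likely to creep in, so I would treat that gluing step with particular care, very likely by drawing the full subdivided flattened picture and reading the counts off it directly.
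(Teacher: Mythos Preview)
Your overall strategy---enumerate stable length-$2$ configurations on expansions of $X_0$, organised by base codimension---is exactly what the paper does, and your Euler-characteristic and $48\cdot 5 = 120\cdot 2$ checks are good sanity tests that the paper does not mention. But two pieces of your reasoning are off.

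First, your account of the $10$ vertices is confused. In base codimension $1$ there is no expansion: the fibre is $X_0$ itself, with no $\Delta$-components yet. The $10$ combinatorial types are simply the $4$ cases ``both points in $Y_i^\circ$'' plus the $\binom{4}{2}=6$ cases ``one point in $Y_i^\circ$, one in $Y_j^\circ$''. Nothing to do with edges or blow-ups.

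Second, your proposed cross-check via $\widetilde{\Sigma}_4$ is wrong. The maximal cones of $\widetilde{\Sigma}_n$ index irreducible components of the expanded \emph{surface} $X[n]$, not stable length-$2$ subschemes on it. The $48$ four-simplices come from counting stable \emph{pairs} of points in the codimension-$5$ fibre (the paper's Figure~\ref{3 deepest strata conf}), not from counting regions of a subdivided tetrahedron; these are different enumerations and there is no reason they should coincide.

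A smaller caution: invoking the full $S_4$ symmetry of the tetrahedron needs care, since the construction of $X[n]$ in Section~\ref{Section : Quartic} is deliberately asymmetric in the $Y_i$ (Definition~\ref{4 BUs}). The paper's enumeration proceeds by direct case analysis (which edges/corners the support meets, and at what relative speeds in the sense of Definition~\ref{speed}) rather than by averaging over a group action; you should do the same, and only use symmetry where it is genuinely present at the level of combinatorial types in the stack.
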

\begin{proof}

    The 1-simplices correspond to length 2 0-dimensional subschemes whose support falls into the $(Y_i\cap Y_j)^\circ$ locus with exactly one multiplicity, i.e.\ only one point falls into this locus or two points at the same speed (in the sense of Definition \ref{speed}). The stable subschemes of this type lie in fibres of $\mathfrak{X}\to \mathfrak{C}$ of base codimension 2. In the complement of one of the $Y_i$ components (which happens 4 times), we have the stable configurations pictured as pairs of coloured points in Figure \ref{1-simplex corner}. We have $3\times 4$ in each of the two pictures, amounting to 24. Then we add pairs of points that occur in the complement of two $Y_i$ and therefore should not be counted 4 times like the previous points. These are shown in Figure \ref{1-simplex edges}. The first picture shows both points falling into the same edge. There are 6 such edges to the tetrahedron so 6 such configurations. The second picture shows a point in a $Y_i^\circ$ and a point falling into an intersection of $Y_i$ with another component. Each $Y_i$ has 3 such intersection loci and there are 4 $Y_i$. This amounts to $4\times 3 = 12$ configurations. Finally, we must count pairs of points where both have fallen into opposite intersection edges, that is, $Y_1\cap Y_2$ with $Y_3\cap Y_4$, $Y_1\cap Y_3$ with $Y_2\cap Y_4$ and $Y_1\cap Y_4$ and $Y_2\cap Y_3$. All together, this gives
    \[
    24 + 6 + 12 + 3 = 45
    \]
    1-simplices.

    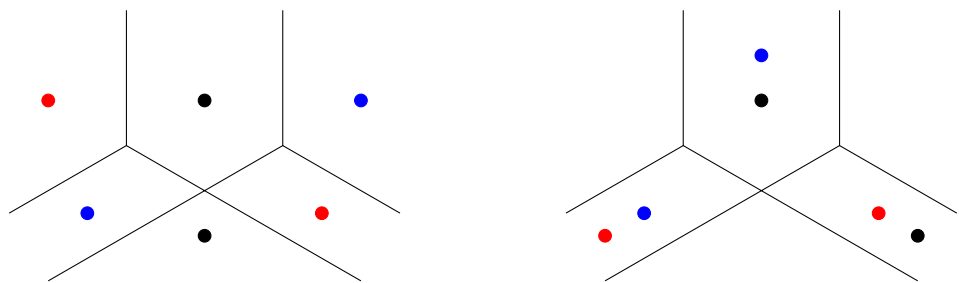
\begin{figure} 
    \begin{center}   
    \begin{tikzpicture}[scale=1.2]
        \draw   (-0.866,0.5) -- (0,0)
        (0,0) -- (0.866,0.5)       
        (-0.866,0.5) -- (-0.866,2)
        (0.866,0.5) -- (0.866,2)
        (-1.732, -1) -- (0,0)
        (-2.165,-0.249) -- (-0.866,0.5)
        
        (1.732, -1) -- (0,0)
        
        (2.165,-0.249) -- (0.866,0.5);
        %\draw (-1.516,-0.375) node[anchor=center]{$\Delta_1^{(1)}$};
        %\draw (1.516,-0.375) node[anchor=center]{$\Delta_2^{(2)}$};
        %\draw (0,1.25) node[anchor=center]{$\Delta_1^{(1)} = \Delta_2^{(2)}$};
        \filldraw[black] (0,-0.5) circle (2pt);
        \filldraw[black] (0,1) circle (2pt);
        \filldraw[blue] (-1.299,-0.25) circle (2pt);
        \filldraw[blue] (1.732,1) circle (2pt);
        \filldraw[red] (1.299,-0.25) circle (2pt);
        \filldraw[red] (-1.732,1) circle (2pt);
    \end{tikzpicture}
    \hspace{2cm}
    \begin{tikzpicture}[scale=1.2]
        \draw   (-0.866,0.5) -- (0,0)
        (0,0) -- (0.866,0.5)       
        (-0.866,0.5) -- (-0.866,2)
        (0.866,0.5) -- (0.866,2)
        (-1.732, -1) -- (0,0)
        (-2.165,-0.249) -- (-0.866,0.5)
        
        (1.732, -1) -- (0,0)
        
        (2.165,-0.249) -- (0.866,0.5);
        %\draw (-1.516,-0.375) node[anchor=center]{$\Delta_1^{(1)}$};
        %\draw (1.516,-0.375) node[anchor=center]{$\Delta_2^{(2)}$};
        %\draw (0,1.25) node[anchor=center]{$\Delta_1^{(1)} = \Delta_2^{(2)}$};
        
        \filldraw[black] (0,1) circle (2pt);
        \filldraw[blue] (0,1.5) circle (2pt);
        \filldraw[blue] (-1.299,-0.25) circle (2pt);
        \filldraw[red] (-1.732,-0.5) circle (2pt);
        \filldraw[red] (1.299,-0.25) circle (2pt);
        \filldraw[black] (1.732,-0.5) circle (2pt);
        
    \end{tikzpicture}
    \end{center}
    \caption{Stable pairs in the complement of a $Y_i$, in base codimension 2.}
    \label{1-simplex corner}
\end{figure}
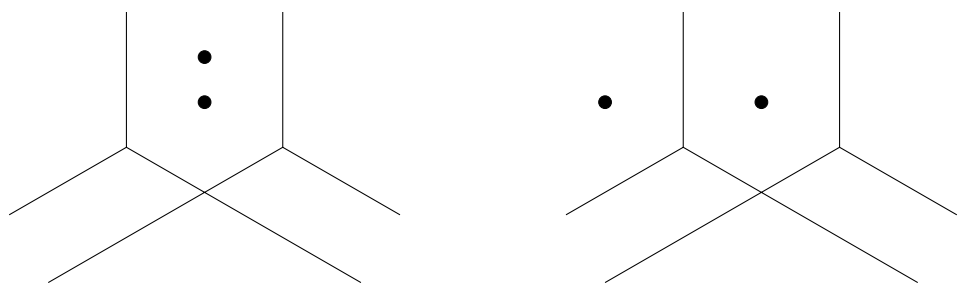
\begin{figure} 
    \begin{center}   
    \begin{tikzpicture}[scale=1.2]
        \draw   (-0.866,0.5) -- (0,0)
        (0,0) -- (0.866,0.5)       
        (-0.866,0.5) -- (-0.866,2)
        (0.866,0.5) -- (0.866,2)
        (-1.732, -1) -- (0,0)
        (-2.165,-0.249) -- (-0.866,0.5)
        
        (1.732, -1) -- (0,0)
        
        (2.165,-0.249) -- (0.866,0.5);
        %\draw (-1.516,-0.375) node[anchor=center]{$\Delta_1^{(1)}$};
        %\draw (1.516,-0.375) node[anchor=center]{$\Delta_2^{(2)}$};
        %\draw (0,1.25) node[anchor=center]{$\Delta_1^{(1)} = \Delta_2^{(2)}$};
        
        \filldraw[black] (0,1) circle (2pt);
        \filldraw[black] (0,1.5) circle (2pt);
        
    \end{tikzpicture}
    \hspace{2cm}
    \begin{tikzpicture}[scale=1.2]
        \draw   (-0.866,0.5) -- (0,0)
        (0,0) -- (0.866,0.5)       
        (-0.866,0.5) -- (-0.866,2)
        (0.866,0.5) -- (0.866,2)
        (-1.732, -1) -- (0,0)
        (-2.165,-0.249) -- (-0.866,0.5)
        
        (1.732, -1) -- (0,0)
        
        (2.165,-0.249) -- (0.866,0.5);
        %\draw (-1.516,-0.375) node[anchor=center]{$\Delta_1^{(1)}$};
        %\draw (1.516,-0.375) node[anchor=center]{$\Delta_2^{(2)}$};
        %\draw (0,1.25) node[anchor=center]{$\Delta_1^{(1)} = \Delta_2^{(2)}$};
        
        \filldraw[black] (0,1) circle (2pt);
        
        \filldraw[black] (-1.732,1) circle (2pt);
    \end{tikzpicture}
    
    \end{center}
    \caption{Stable pairs in the complement of two $Y_i$s, in base codimension 2.}
    \label{1-simplex edges}
\end{figure}

The 2-simplices correspond to stable length 2 0-dimensional subschemes of base codimension 3. Firstly, both points of the support could land in the triple intersections of $X_0$ at the same speed. We count ${4\choose 2} = 6$ for points landing in different triple intersections, and 4 for points landing in the same intersection, adding up to 10. Secondly, one of the two points of the support could land in the triple intersection and the other in a $Y_i^\circ$. In this case, there are 4 choices of triple intersection and 4 choices of $Y_i$ which makes $4\times 4 = 16$. Next, one point could fall into the triple intersection locus and one into an edge. The pair will be LW stable regardless of into which $\Delta$-component the second point falls. We therefore have 4 choices of triple intersection $\PP^1\times\PP^1$ box, and 12 choices of $\PP^1$ bundles over a $Y_i\cap Y_j$ edge, which makes $4\times 12 = 48$. Finally, both points could have hit the codimension 1 intersection locus of the degenerate K3 at different speeds. If both points hit different edges at different speeds, we have $2\times {6\choose 2} = 30$ choices. The factor of 2 comes from the fact that the order matters if we consider different edges. Both points could also hit the same edge at different speeds, in which case the order is not counted, i.e.\ there are 6 such options. All together, that makes
\[
10 + 16 +48 + 30+6 =110
\]
2-simplices.

The 3-simplices correspond to stable length 2 0-dimensional subschemes with base codimension 4. In this base codimension, the fibres of the expanded degeneration have 3 $\PP^1$ bundles over each $Y_i\cap Y_j$ and 3 $\PP^1\times\PP^1$ boxes in each triple intersection. There are ${12\choose 2} - {4\choose 2}\times 3 = 48$ stable pairs where both points fall into a triple intersection. We start by counting all pairs of 2 boxes out of the 12 available, then subtract pairs where both boxes have the same $\Delta$-type. The only other stable option here is when one point falls into a triple intersection and one into an edge. Once the $\Delta$-type of the box is determined, the $\Delta$-type of the edge is completely determined. We therefore have $6\times 4 \times 3 = 72$ such configurations, where 6 is the choice of edge, 4 is the choice of corner and 3 is the choice of box in that corner. Together, this sums to
\[
48 + 72 = 120
\]
3-simplices.

The 4-simplices of $\Pi_Q$ correspond to length 2 0-dimensional subschemes whose points of the support have fallen into the triple intersection points of the degenerate K3 at different speeds. These could be the same triple intersection or two different ones. There are 4 such triple intersections and thus 10 pairs of them, including pairs made up of twice the same one. For each of the 4 pairs of subschemes which fall into a single triple intersection, there are 3 ways for the families of length 2 0-dimensional subschemes to take stable limits, as represented in Figure \ref{3 deepest strata conf}. The 2 points of a same colour fall on different triple intersections for the other 6 pairs. Note that in this case, the order of which intersection is hit first matters. We therefore have $3\times 2$ types of limit for these pairs. Together, that makes a total of
    \[
    4\times3 + 6\times3\times2 = 48
    \]
    4-simplices.
    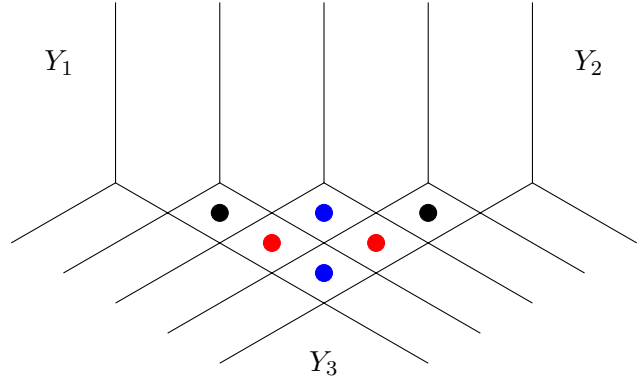
\begin{figure}
        \centering
        \begin{tikzpicture}[scale=1.6]
        \draw    (0.866,-0.5) -- (0,-1)
        (0,-1) -- (-0.866,-0.5)
        (0,0) -- (0,1.5)
        (0,0) -- (-0.866,-0.5)
        (0,0) -- (0.866,-0.5)
        (-0.866,-0.5) -- (-1.732,0)       
        (-1.732,0) -- (-1.732,1.5)
        (-1.732, -1) -- (-0.866,-0.5)
        (1.732, -1) -- (0.866,-0.5)
        (-0.866,-1.5) -- (0,-1)
        (0.866,-1.5) -- (0,-1)
        (-2.599, -0.5) -- (-1.732,0)
        (1.732, 0) -- (1.732,1.5)
        (1.732, 0) -- (0.866,-0.5)
        (1.732, 0) -- (2.599,-0.5)
        (-0.866,0) -- (-0.866,1.5)
        (-0.866,0) -- (-2.165,-0.75)
        (-0.866,0) -- (1.299,-1.25)
        (0.866,0) -- (0.866,1.5)
        (0.866,0) -- (2.165,-0.75)
        (0.866,0) -- (-1.299,-1.25)
        ;
        \draw (-2.2,1) node[anchor=center]{$Y_1$};
        \draw (2.2,1) node[anchor=center]{$Y_2$};
        \draw (0,-1.5) node[anchor=center]{$Y_3$};
        \filldraw[black] (-0.866,-0.25) circle (2pt);
        \filldraw[black] (0.866,-0.25) circle (2pt);
        \filldraw[red] (-0.433,-0.5) circle (2pt);
        \filldraw[red] (0.433,-0.5) circle (2pt);
        \filldraw[blue] (0,-0.25) circle (2pt);
        \filldraw[blue] (0,-0.75) circle (2pt);
    \end{tikzpicture}
        \caption{Deepest strata of a limit.}
        \label{3 deepest strata conf}
    \end{figure}
\end{proof}

\begin{remark}
    The numbers computed in Theorem \ref{count quartic} are exactly the same as in the simplicial complex computed by Bagchi and Datta \cite{BD}. This is extremely interesting, as we had no reason to expect such a result. Indeed, for a sphere it is known that the number of vertices in a triangulation determines the number of edges. This is not the case in higher dimension. In general knowing the number of vertices of a complex of simplices homeomorphic to $\CC\PP^n$ does not determine the number of $k$-simplices of this complex. It would be interesting to understand why this occurs in our case. Our hypothesis is that the expanded degeneration put forward here is minimal in the sense that it gives rise to the smallest possible stack of expansions which solves this problem. Note that in \cite{BD}, they study the symmetric product and their complex is unique as the simplices are uniquely determined by their vertices. In our case, however, we can find several 4-simplices which share all the same vertices.
\end{remark}

\subsection{Cube degeneration}\label{cube dual comp}
Let $\Pi_C$ denote the dual complex of $C_0$, the special fibre of $C\to \AAA^1$.
\begin{theorem}\label{count cube}
    The complex $\Pi_C$ is made up of 21 vertices, 120 edges, 420 triangles, 480 tetrahedrons and 192 4-simplices.
\end{theorem}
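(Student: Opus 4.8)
The plan is to mirror the structure of the proof of Theorem \ref{count quartic}, stratifying the count of $k$-simplices of $\Pi_C$ by the combinatorial type of the supporting length $2$ subscheme in the expanded degeneration of the cube degeneration of K3 surfaces. Recall that, by Section \ref{Section : Cube}, the dual complex of $X_0$ here is the boundary of the octahedron: $6$ vertices (the coordinate $\PP^1\times\PP^1$'s $Y_i$), $12$ edges (the coordinate $\PP^1$'s, each intersection of two components), and $8$ triangular $2$-faces (the coordinate points where three components meet). The combinatorial bookkeeping is governed by this octahedral incidence structure together with the $\Delta$-component data from the Li--Wu expansion, exactly as the tetrahedron governed the quartic case.

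First I would identify the $0$-simplices (vertices of $\Pi_C$): these correspond to length $2$ subschemes supported in the interior of a single $Y_i$. Since there are $6$ components $Y_i$, and the only choice is which component, but we must also recall — as in the quartic case — that a configuration of two points in a single $Y_i^\circ$ may carry finitely many maximal-limit types; a quick count gives $21$ vertices ($6$ from a double point or two points in the same $Y_i$ of ``genuinely $2$-dimensional'' type, with the remaining $15 = \binom{6}{2}$ coming from pairs of points in two distinct components meeting along an edge — these too are base codimension $0$ in $\mathfrak{X}$ and hence vertices). I would make this precise by carefully unwinding Definition \ref{Def: LW stab} on the global model. Next, for the $1$-simplices ($120$), I would enumerate the base codimension $2$ configurations: both points falling into a single edge $(Y_i\cap Y_j)^\circ$ at the same speed ($12$ edges); one point in $Y_i^\circ$ and one in an adjacent edge locus; and pairs of points falling into two different edge loci, distinguishing adjacent edges (sharing a vertex of the octahedron) from disjoint ones, using the octahedral adjacency graph. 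Each case gets multiplied by the number of incidences from the octahedron and then corrected for overcounting across the complements of the $Y_i$, exactly as in the quartic proof's ``$24 + 6 + 12 + 3$'' breakdown.

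Then for the $2$-, $3$- and $4$-simplices I would repeat the analysis at base codimension $3$, $4$ and (for $4$-simplices) with the two points sinking into triple points of $X_0$ at different speeds. Here the relevant count is over the $8$ triangular faces of the octahedron and the $12$ edges, and over the number of $\Delta_1 = \Delta_2$ boxes ($3$ of them in base codimension $4$ over each triple point, $3$ $\PP^1$-bundles over each edge) — the same numerology as the quartic case but with ``$4$ triple intersections'' replaced by ``$8$'' and ``$6$ edges'' replaced by ``$12$''. For the top-dimensional count I would use Figure \ref{3 deepest strata conf} verbatim: for each of the $8$ triple points there are $3$ stable limit types, contributing $8 \times 3 = 24$; for ordered pairs of distinct triple points that are ``compatible'' (adjacent on the octahedron, i.e.\ sharing an edge) there are $3 \times 2$ limit types each, and I would count the number of such ordered pairs from the octahedral face-adjacency graph. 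Summing the stratified contributions should yield $21$, $120$, $420$, $480$, $48 \cdot 4 = 192$ respectively.

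The main obstacle I anticipate is the combinatorial accounting for which pairs of octahedral faces (triple points) admit a common degeneration — in the tetrahedron every pair of vertices/faces is ``adjacent'', which made the quartic count uniform, but the octahedron has both adjacent and antipodal (non-adjacent) triangular faces, so the $4$-simplex count splits into cases that have no analogue in the quartic proof, and I will need to check carefully which antipodal or non-adjacent configurations still produce a stable limit of base codimension $8$ (equivalently, a genuine $4$-simplex rather than a lower-dimensional face counted with the wrong multiplicity). A secondary subtlety is ensuring no double-counting of simplices that sit in the complement of two or more $Y_i$ components simultaneously, which already required the correction terms in the quartic case and will be more delicate given the larger number of incidences; I would handle this by systematically organising the count by the smallest union of affine patches containing the support, as in the last paragraph of the proof of the preceding Proposition.
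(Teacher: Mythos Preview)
Your approach is exactly what the paper does: its entire proof of Theorem \ref{count cube} is the single sentence ``This can be shown similarly to the proof of Theorem \ref{count quartic}.'' Mirroring the quartic count with octahedral combinatorics is precisely the intended argument.

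Two small corrections to your bookkeeping. First, in the vertex count the number $15 = \binom{6}{2}$ is right but the description ``two distinct components meeting along an edge'' is not: the three antipodal pairs of coordinate $\PP^1\times\PP^1$'s do not meet at all, yet a point in each still gives a $4$-dimensional component of $C_0$ and hence a vertex of $\Pi_C$. Second, the ``main obstacle'' you flag is a red herring. Because the cube degeneration admits a consistent $\{1,2,3\}$-labelling of the components (Section \ref{Section : Cube}), the expansion is globally uniform and \emph{every} unordered pair of triple points --- edge-adjacent, vertex-adjacent, or antipodal --- contributes $3\times 2$ stable limit types, just as in the tetrahedron where every pair happened to be edge-adjacent. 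Restricting to adjacent pairs would give $8\cdot 3 + 12\cdot 6 = 96$, not $192$; the correct count is $8\cdot 3 + \binom{8}{2}\cdot 6 = 24 + 168 = 192$. (Your ``$48\cdot 4$'' is a numerical coincidence: the two summands do not individually scale by $4$ from the quartic case.) Finally, the maximal base codimension for $m=2$ is $5$, not $8$.
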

\begin{proof}
    This can be shown similarly to the proof of Theorem \ref{count quartic}.
\end{proof}

\bibliography{bibliography}
\bibliographystyle{alpha}

\footnotesize

\end{document}